\newtheorem{lemma}{Lemma}
\newtheorem{theorem}[lemma]{Theorem}
\newtheorem{corollary}[lemma]{Corollary}
\theoremstyle{remark}
\newtheorem{conjecture}{Conjecture}
\newtheorem{definition}{Definition}
\newcommand{\diam}{\operatorname{diam}}
\newcommand{\old}[1]{{}}
\DeclarePairedDelimiter\abs{\lvert}{\rvert}
\DeclarePairedDelimiter\ceil{\lceil}{\rceil}
\DeclarePairedDelimiter\floor{\lfloor}{\rfloor}
\title{On the maximum diameter of $k$-colorable graphs}
\author{\'Eva Czabarka}
\author{Inne Singgih}
\author{L\'aszl\'o Sz\'ekely}
\address{\'Eva Czabarka\\Department of Mathematics \\ University of South Carolina \\ Columbia SC 29212 \\ USA
\and Visiting Professor\\ Department of Pure and Applied Mathematics\\ University of Johannesburg\\
P.O. Box 524, Auckland Park, Johannesburg 2006\\South Africa}
\email{czabarka@math.sc.edu}
\address{Inne Singgih\\Department of Mathematical Sciences\\ University of Cincinnati \\ Cincinnati, OH 45221 \\ USA}
\email{inne.singgih@uc.edu}
\address{L\'aszl\'o Sz\'ekely\\Department of Mathematics \\ University of South Carolina \\ Columbia SC 29212 \\ USA
\and Visiting Professor\\ Department of Pure and Applied Mathematics\\ University of Johannesburg\\
P.O. Box 524, Auckland Park, Johannesburg 2006\\ South Africa}
\email{szekely@math.sc.edu}
\subjclass[2010]{Primary 05C35; secondary 05C15, 05C12}
\keywords{chromatic number, diameter, minimum degree}
\thanks{The last two authors were  supported in part by the National Science Foundation  contract  DMS  1600811.}
\begin{document}
\begin{abstract} 
Erd\H{o}s, Pach, Pollack and Tuza   [\emph{J. Combin. Theory} {\bf B 47} (1989), 279--285] conjectured that  the diameter of a $K_{2r}$-free connected graph of order $n$ and 
minimum degree $\delta\geq 2$ is at most  $\frac{2(r-1)(3r+2)}{(2r^2-1)}\cdot \frac{n}{\delta} + O(1)$ for every $r\ge 2$, if
$\delta$ is a multiple of $(r-1)(3r+2)$. For every $r>1$ and  $\delta\ge 2(r-1)$, we create $K_{2r}$-free graphs with minimum degree $\delta$ and 
diameter $\frac{(6r-5)n}{(2r-1)\delta+2r-3}+O(1)$, which are counterexamples to the
conjecture for every $r>1$ and $\delta>2(r-1)(3r+2)(2r-3)$.  The rest of the paper proves positive results under a stronger hypothesis, $k$-colorability, instead of 
being $K_{k+1}$-free. We show that the diameter of connected $k$-colorable graphs with minimum degree
$\geq \delta$ and order $n$ is at most  $\left(3-\frac{1}{k-1}\right)\frac{n}{\delta}+O(1)$, while for 
$k=3$, it is
 at most $\frac{57n}{23\delta}+O\left(1\right)$.  
\end{abstract}
\maketitle

\section{Introduction}

The following theorem was
discovered several times \cite{Amar, EPPT, Gold, Moon}:

\begin{theorem}\label{th:ori}
For a fixed minimum degree $\delta \geq 2$ and  $n\rightarrow\infty$,  for every $n$-vertex connected graph $G$, we have  
$\diam(G) \leq \frac{3n}{\delta+1}+O(1)$. 
\end{theorem}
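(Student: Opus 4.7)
The plan is to exploit the minimum-degree hypothesis locally along a diametrical shortest path. Let $d=\diam(G)$ and fix a shortest path $v_0,v_1,\ldots,v_d$ realizing the diameter. For every index $i\in\{0,\ldots,d\}$, the closed neighborhood $N[v_i]$ has at least $\delta+1$ vertices. Moreover, by the triangle inequality, every $u\in N[v_i]$ satisfies $|d(v_0,u)-i|\le 1$, so $N[v_i]$ is contained in the union of three consecutive BFS layers from $v_0$, namely the layers at distances $i-1,i,i+1$.

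The main combinatorial step is to show that if $|i-j|\ge 3$, then $N[v_i]\cap N[v_j]=\emptyset$. Indeed, a common vertex $u$ would give $d(v_i,v_j)\le d(v_i,u)+d(u,v_j)\le 2$; but $v_i$ and $v_j$ lie on a shortest $v_0$--$v_d$ path, so $d(v_i,v_j)=|i-j|\ge 3$, a contradiction. Equivalently, one can invoke the BFS-layer observation above: the intervals $\{i-1,i,i+1\}$ and $\{j-1,j,j+1\}$ are disjoint when $|i-j|\ge 3$, so the respective closed neighborhoods live in disjoint layers.

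With this in hand, I would choose the indices $0,3,6,\ldots,3\lfloor d/3\rfloor$, producing $\lfloor d/3\rfloor+1$ path vertices whose closed neighborhoods are pairwise disjoint subsets of $V(G)$, each of size at least $\delta+1$. Summing gives $n\ge (\lfloor d/3\rfloor+1)(\delta+1)$, which rearranges to $\diam(G)=d\le \frac{3n}{\delta+1}+O(1)$, as claimed. The argument is entirely elementary and purely local; there is no genuine obstacle, and the only point requiring real verification is the disjointness lemma, which reduces immediately to the shortest-path property. The main cosmetic issue is managing integer-part corrections and the exact placement of the arithmetic progression of indices (one could also center it away from the endpoints), but any such discrepancy is absorbed by the additive $O(1)$ term.
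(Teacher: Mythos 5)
Your proof is correct. Note that the paper does not actually prove Theorem~\ref{th:ori}; it cites it as a result discovered independently in \cite{Amar, EPPT, Gold, Moon}, so there is no in-paper proof to compare against. Your argument is the standard elementary one: pick a diametrical geodesic $v_0,\dots,v_d$, observe that $d(v_i,v_j)=|i-j|$ on a shortest path so closed neighborhoods of vertices at indices differing by at least $3$ are disjoint, and pack $\lfloor d/3\rfloor+1$ disjoint sets of size at least $\delta+1$ into $V(G)$. The arithmetic even gives the cleaner bound $d\le \frac{3n}{\delta+1}-1$, since $\lfloor d/3\rfloor+1\ge \frac{d+1}{3}$. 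One small conceptual remark worth making: this disjoint-neighborhood packing is the combinatorial prototype of the linear-programming dual packing the paper develops in Section~\ref{sec:dual} (Theorem~\ref{th:sulyozas}); assigning a uniform dual weight of $\frac{1}{3}$ per layer there recovers exactly your bound, while the paper's refinements (splitting weight across clumps within a layer according to color structure) are what yield the improved constants for $k$-colorable graphs.
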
 
Note that the upper bound is sharp (even for $\delta$-regular graphs \cite{Smyth}), but the constructions have complete subgraphs whose order increases with $\delta$.  
Erd\H{o}s, Pach, Pollack, and Tuza  \cite{EPPT} conjectured that the  upper bound in Theorem~\ref{th:ori} can be strengthened
for graphs not containing complete subgraphs:

\begin{conjecture}    \cite{EPPT}
\label{con:Erdosetal}
Let $r,\delta\geq 2$ be fixed integers and let $G$ be a connected graph of order $n$ and minimum degree $\delta$. 
\begin{enumerate}[label={\upshape (\roman*)}]
\item\label{conpart:even} If $G$ is $K_{2r}$-free and $\delta$ is a multiple of
$(r-1)(3r+2)$ then, as $n\rightarrow \infty$,
\[ \diam(G) \leq \frac{2(r-1)(3r+2)}{(2r^2-1)}\cdot \frac{n}{\delta} + O(1)=\left(3-\frac{2}{2r-1}-\frac{1}{(2r-1)(2r^2-1)}\right)\frac{n}{\delta}+O(1). \]
\item\label{conpart:odd}  If $G$ is $K_{2r+1}$-free and $\delta$ is a multiple of $3r-1$, then, as $n\rightarrow \infty$,
\[ \diam(G) \leq \frac{3r-1}{r}\cdot \frac{n}{\delta} + O(1)=\left(3-\frac{2}{2r}\right)\frac{n}{\delta}+O(1). \]
\end{enumerate}
\end{conjecture}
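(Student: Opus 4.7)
The natural attack is the \emph{diametral path argument}. Fix vertices $x,y$ with $d(x,y)=\diam(G)=:d$ and a shortest $x$--$y$ path $P: x=v_0,v_1,\ldots,v_d=y$. Since $P$ is a geodesic, any vertex of $G$ is adjacent to at most three consecutive $v_i$'s, and $N[v_i]\cap N[v_j]=\emptyset$ whenever $|i-j|\geq 3$. The classical bound $\diam(G)\leq 3n/(\delta+1)+O(1)$ of Theorem~\ref{th:ori} follows by grouping the $v_i$'s into consecutive triples and summing $|N[v_{3i+1}]|\geq\delta+1$. To improve the leading constant from $3$ to $\frac{2(r-1)(3r+2)}{2r^2-1}$ under $K_{2r}$-freeness, one must extract extra savings from each block.

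The $K_{2r}$-freeness offers useful local facts: for every vertex $v$ the subgraph induced on $N(v)$ is $K_{2r-1}$-free, and for adjacent $u,v$ the subgraph induced on $N(u)\cap N(v)$ is $K_{2r-2}$-free. I would partition the diametral path into blocks of some length $L=L(r)$, and bound below the union of closed neighborhoods across each block by a Tur\'an-type density argument: if the union were too small, the $\delta$-sized neighborhoods would overlap so heavily that one of them would contain a copy of $K_{2r-1}$, creating a forbidden $K_{2r}$. Balancing $L$ against the resulting local density constraint should yield the claimed ratio. The divisibility hypothesis $(r-1)(3r+2)\mid\delta$ should reflect a balanced extremal gadget---a regular $K_{2r}$-free blow-up---whose parameters must be integers for the bound to be attained; the explicit form $3-\frac{2}{2r-1}-\frac{1}{(2r-1)(2r^2-1)}$ of the constant suggests iterating the Tur\'an-type step at \emph{two} different scales, first on $N(v_i)$ and then on $N(v_i)\cap N(v_{i+1})$.

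The main obstacle is extracting the last $\frac{1}{(2r-1)(2r^2-1)}$ of savings. A crude Tur\'an bound on neighborhoods already yields $\bigl(3-\frac{2}{2r-1}\bigr)\frac{n}{\delta}+O(1)$; closing the remaining gap requires tracking interactions across \emph{several} consecutive triples simultaneously and pinning down the optimal local configuration exactly. Moreover, the abstract of this paper indicates that this is not merely technically hard but genuinely false for $\delta>2(r-1)(3r+2)(2r-3)$, where the authors construct $K_{2r}$-free graphs with diameter $\frac{(6r-5)n}{(2r-1)\delta+2r-3}+O(1)$ exceeding the conjectured bound. Any proof along the above lines must therefore fail precisely at the step where it attempts to rule out a structure of this shape, and a successful partial proof---necessarily restricted to a smaller regime of $\delta$---would have to use the divisibility hypothesis in a rather rigid way, most likely through a delicate integrality constraint on the putative extremal blow-up.
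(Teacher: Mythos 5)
The statement you were asked to prove is a \emph{conjecture}, not a theorem: the paper does not prove it, and in fact Section~\ref{sec:counterex} \emph{refutes} part~\ref{conpart:even} for every $r\ge 2$ and $\delta> 2(r-1)(3r+2)(2r-3)$ by an explicit construction (a $(2r-1)$-colorable, hence $K_{2r}$-free, graph with diameter $\frac{(6r-5)n}{(2r-1)\delta+2r-3}+O(1)$, which exceeds the conjectured bound in that range). You correctly recognize this at the end of your write-up, so your ``proposal'' is really a heuristic discussion of why a diametral-path/Tur\'an argument should stall rather than a proof. That self-diagnosis is sound.

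Where your sketch goes astray technically is the claim that ``a crude Tur\'an bound on neighborhoods already yields $\bigl(3-\frac{2}{2r-1}\bigr)\frac{n}{\delta}+O(1)$.'' This is overstated. Even under the \emph{strictly stronger} hypothesis of $(2r-1)$-colorability (rather than mere $K_{2r}$-freeness), the strongest general bound the paper obtains via its packing/duality method (Theorem~\ref{th:upperbound} with $k=2r-1$) is $\bigl(3-\frac{1}{2r-2}\bigr)\frac{n}{\delta}+O(1)$, which is weaker than $\bigl(3-\frac{2}{2r-1}\bigr)\frac{n}{\delta}+O(1)$ for all $r\ge 2$. The bound $\bigl(3-\frac{2}{2r-1}\bigr)\frac{n}{\delta}+O(1)$ is in fact exactly what the authors' \emph{modified} Conjecture~\ref{con:7o3} asserts for $k=2r-1$, and it is left open; the only case of the conjectural picture actually proved is $k=3$ with the further restriction of no internal single-color layers (Theorem~\ref{special}). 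So there is no known ``crude'' route to the constant you quote, and the original Conjecture~\ref{con:Erdosetal}~\ref{conpart:even} is simply false as stated once $\delta$ is large, which closes off any proof attempt in full generality.
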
 
Set $k=2r$ or $k=2r+1$ according the cases. As connected $\delta$-regular graphs are $K_{\delta+1}$-free (apart from $K_{\delta+1}$ itself), 
we need $\delta\ge k$ (at least) to make improvement on Theorem~\ref{th:ori}. Furthermore, as the conjectured constants in the bounds are at most $3-\frac{2}{k}$, 
Theorem~\ref{th:ori} implies that the conjectured inequalities hold  trivially, unless $\delta\geq\frac{3k}{2}-1$.

Erd\H{o}s \emph{et al.} \cite{EPPT} constructed graph sequences for every  $r,\delta\geq 2$, where   $\delta$ satisfies the divisibility condition, which 
meet the  upper bounds in Conjecture \ref{con:Erdosetal}. 
We show these construction them in Section~\ref{sec:clumpdef}.

Part (ii) of Conjecture~\ref{con:Erdosetal}  for $r=1$ was proved in 
Erd\H{o}s \emph{et al.} \cite{EPPT}.
Conjecture \ref{con:Erdosetal} is included in the book of Fan Chung and Ron Graham \cite{FanChung}, which collected Erd\H os's significant problems in graph theory.

No more progress has been reported on this conjecture, except that
for $r=2$ in (ii), under a stronger hypothesis  ($4$-colorable instead of $K_5$-free), Czabarka, Dankelman and Sz\'ekely \cite{dankelmanos} arrived at the
conclusion of Conjecture \ref{con:Erdosetal}:
\begin{theorem} 
\label{th:CDS}
For every connected $4$-colorable graph $G$ of order $n$ and
minimum degree $\delta\ge 1$,
\( \diam(G) \leq \frac{5n}{2\delta}-1. \)
\end{theorem}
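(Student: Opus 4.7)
The plan is to fix a diametral path $P \colon v_0 v_1 \cdots v_d$ with $d = \diam(G)$ and exploit the geodesic property that $N[v_i] \cap N[v_j] = \emptyset$ whenever $|i-j| \geq 3$. The classical proof of Theorem~\ref{th:ori} groups $P$ into blocks of three consecutive indices and uses the pairwise disjoint closed neighborhoods $N[v_0], N[v_3], N[v_6], \ldots$ to obtain the ratio $3/(\delta+1)$. To sharpen this to $5/(2\delta)$ under the stronger $4$-colorability hypothesis, I would regroup $P$ into blocks of five consecutive indices and aim to show that each block ``owns'' at least $2\delta$ vertices of $G$, pairwise disjoint across blocks. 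Summing over the $(d+1)/5$ blocks would then yield $n \geq 2\delta(d+1)/5$, i.e., $d \leq \frac{5n}{2\delta} - 1$.

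For the ownership, I would assign to block $B_\ell = \{v_{5\ell}, \ldots, v_{5\ell+4}\}$ the set $S_\ell$ of vertices of $G$ whose closest path-vertex lies in $B_\ell$, with some fixed tie-breaking. Disjointness across blocks is then automatic, and geodesicity forces $S_\ell \subseteq \bigcup_{j=0}^{4} N[v_{5\ell+j}]$. The nontrivial task is the lower bound $|S_\ell| \geq 2\delta$ for each inner $\ell$: by inclusion-exclusion this is controlled by the sizes of intersections $N(v_i) \cap N(v_{i+2})$ along $P$, which determine how many common neighbors of distance-$2$ path vertices get attributed to $S_\ell$ rather than to the adjacent blocks.

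The key step is a local structural lemma about five consecutive geodesic vertices $v_i, \ldots, v_{i+4}$ in a $4$-colorable graph, in which $4$-colorability forces these neighborhoods to cover at least $2\delta$ vertices once they are distributed across blocks according to the tie-breaking rule. The $4$-colorability would be used to rule out dense configurations among the common neighbors of distance-$2$ pairs: if $v_{i+1}$ and $v_{i+3}$ share too many common neighbors with too many edges among them, the induced subgraph on those common neighbors together with $v_{i+1}, v_{i+2}, v_{i+3}$ has chromatic number exceeding $4$. Concretely one would look for forbidden $5$-chromatic subgraphs built from $K_{2,t}$-type gadgets between $v_{i+1}, v_{i+3}$ and their common neighborhood, enriched with the extra edges forced by the surrounding path structure; these produce the contradiction that drives the desired lower bound on $|S_\ell|$.

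The main obstacle I anticipate is calibrating the block size and the set $S_\ell$ so that the $4$-colorability argument delivers exactly $2\delta$ and not a weaker constant. A naive inclusion-exclusion on $N[v_{5\ell+1}] \cup N[v_{5\ell+3}]$ loses $|N(v_{5\ell+1}) \cap N(v_{5\ell+3})|$ vertices to the overlap, which without a colorability assumption can be nearly $\delta$; the proof must therefore either re-attribute those common neighbors to the correct block via the tie-breaking, or bound the overlap tightly using $4$-colorability. Tightening the additive constant from a generic $O(1)$ down to the sharp $-1$ appearing in the statement is an end-effect issue that I would handle by a careful analysis of the two extremal blocks around $v_0$ and $v_d$.
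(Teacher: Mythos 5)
Note first that this paper does not prove the statement; it is quoted from Czabarka--Dankelmann--Sz\'ekely~\cite{dankelmanos}. Your proposed route --- a diametral path $v_0\ldots v_d$, blocks of five consecutive path vertices, and an ownership assignment $S_\ell$ --- is structurally quite different from both the cited proof and the machinery in Sections~\ref{sec:dual}--\ref{sec:opt} of this paper, all of which work with distance layers $L_i$ from a vertex of maximum eccentricity, good $k$-colorings partitioning each $L_i$ into clumps, and inclusion-exclusion or LP-duality on the layer sizes. The arithmetic step (each block owns $\ge 2\delta$ vertices, hence $d\le 5n/(2\delta)-1$) is fine; the difficulty is entirely in the proposed local lemma.

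That lemma, as you have sketched it, does not hold, and the reason is that $4$-colorability places no useful bound on the overlap $T=N(v_{i+1})\cap N(v_{i+3})$ and produces no forbidden $5$-chromatic gadget. A proper coloring only forbids the colors of $v_{i+1}$ and $v_{i+3}$ on $T$; beyond that $T$ is unconstrained, and in particular $T$ may be an independent set of any size up to $\delta-1$. The configuration consisting of $T$ together with $v_{i+1},v_{i+2},v_{i+3}$ and the two flanking path vertices is then at most $3$-chromatic (one color for $T\cup\{v_{i+2}\}$, one for $\{v_{i+1},v_{i+3}\}$, one for the flanks), so the ``$K_{2,t}$ plus path'' gadget you want to forbid is perfectly $3$-colorable, let alone $4$-colorable. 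Even allowing edges inside $T$, bipartite $T$ together with $v_{i+1},v_{i+2},v_{i+3}$ stays $4$-colorable, so the contradiction never materializes. With $|T|\approx\delta$, the set $N[v_{i+1}]\cup N[v_{i+2}]\cup N[v_{i+3}]$ has size only about $\delta+O(1)$, far short of $2\delta$, and the shortfall cannot be reliably recovered from $N[v_{5\ell}]$ or $N[v_{5\ell+4}]$, which suffer the same overlap pathology with the adjacent blocks. Re-attribution by tie-breaking cannot help either: the vertices of $T$ already belong to $S_\ell$, so the problem is a count that is too small, not a count assigned to the wrong block.

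More fundamentally, restricting attention to a single diametral path discards exactly the information that colorability can exploit. The layer-based approach knows \emph{all} vertices at distance $i$, can split each $L_i$ into at most $k$ color classes, and can run inclusion-exclusion over several distinctly colored representatives per layer (this is precisely what Section~\ref{sec:inex} does for $k=3$). A geodesic only supplies one vertex per distance, and two path vertices at distance two apart impose essentially no chromatic constraint on each other --- they may even share a color. To improve on $3n/(\delta+1)$ you have to engage the whole layer, not a path through it, and the proposal as written does not do that.
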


In Section \ref{sec:counterex}, we give an unexpected counterexample for  Conjecture \ref{con:Erdosetal}~\ref{conpart:even} for
every  $r\geq 2$ and $\delta> 2 (r-1)(3r+2)(2r-3)$.  The question whether Conjecture \ref{con:Erdosetal}~\ref{conpart:even} holds in the 
range $(r-1)(3r+2)\le\delta\le 2(r-1)(3r+2)(2r-3)$ is still open.
The counterexample leads to a modification of  Conjecture \ref{con:Erdosetal}, which no longer requires cases: 
 \begin{conjecture} \label{con:7o3}
 For every $k\ge 3$ and $\delta\ge \lceil\frac{3k}{2}\rceil-1$, 
if $G$ is a $K_{k+1}$-free (weaker version: $k$-colorable) connected graph of order $n$ and  minimum degree at least $\delta$,  
  $\diam(G)\leq \left(3-\frac{2}{k}\right)\frac{n}{\delta}+O(1)$. 
    \end{conjecture}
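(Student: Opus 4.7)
I would attempt the BFS-layer strategy underlying the proofs of Theorems~\ref{th:ori} and~\ref{th:CDS}: fix a geodesic $v_0v_1\cdots v_d$ realising $d=\diam(G)$, partition $V(G)$ into distance layers $V_i=\{u:\,d(v_0,u)=i\}$, and aim for a ``window lemma'' of the form
\[
\bigl|V_i\cup V_{i+1}\cup\cdots\cup V_{i+w-1}\bigr|\;\geq\;\frac{wk}{3k-2}\,\delta+O(1)
\]
for a fixed window width $w$ depending on $k$. Summing this over windows that cover the geodesic yields $n\geq\frac{d\delta}{3-2/k}+O(1)$, which rearranges to the claimed diameter bound. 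The trivial estimate $|V_{j-1}\cup V_j\cup V_{j+1}|\geq\delta+1$, coming from $N[v_j]\subseteq V_{j-1}\cup V_j\cup V_{j+1}$, corresponds to $(w,\text{constant})=(3,1)$ and gives only $d\leq 3n/\delta+O(1)$, so the entire task is to exploit the additional hypothesis to push the per-layer density from $\delta/3$ up to $k\delta/(3k-2)$.

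To obtain such an improvement I would analyse a block of consecutive geodesic vertices $v_j,v_{j+1},\ldots,v_{j+\ell}$ and bound $\bigl|\bigcup_i N[v_{j+i}]\bigr|$ from below using $k$-colorability. Each $N(v_{j+i})$ is $(k-1)$-colorable, hence contains an independent set of size at least $\lceil\delta/(k-1)\rceil$; under the stronger $K_{k+1}$-free hypothesis the same independence bound follows from Tur\'an's theorem, so the framework should apply uniformly. For a non-adjacent pair $v_j,v_{j+2}$ on the geodesic, any common neighbor $u$ forms a triangle with them, so in every proper $k$-coloring $v_j$ and $v_{j+2}$ receive distinct colors; the common neighborhood $N(v_j)\cap N(v_{j+2})$ then inherits a proper coloring on the remaining $k-2$ colors, and is therefore $(k-2)$-colorable. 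Chaining such intersection bounds along a carefully chosen sub-sequence of geodesic vertices with step sizes mixed from $\{1,2,3\}$, and doing inclusion--exclusion on their closed neighborhoods inside $2w/3$-ish consecutive layers, should produce the target window count.

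The main obstacle, and presumably the reason this remains a conjecture, is extracting the constant $3-2/k$ exactly. The present paper obtains only $(3-1/(k-1))n/\delta$ in general and $57n/(23\delta)$ for $k=3$, both strictly weaker than the conjectured bound. The Erd\H{o}s--Pach--Pollack--Tuza extremal constructions are assembled by chaining small ``clumps'' of controlled chromatic number, so a matching proof likely needs a charging scheme that mirrors the clump structure locally; the straightforward coloring/inclusion--exclusion outlined above appears to leak a factor of exactly the size the conjecture demands one save, so a genuinely new combinatorial idea---rather than just sharper pairwise intersection bounds---seems necessary, especially under the stronger $K_{k+1}$-free hypothesis where one lacks explicit colorings to push around.
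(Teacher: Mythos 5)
The statement you were asked to ``prove'' is Conjecture~\ref{con:7o3}, which the paper explicitly poses as an \emph{open} conjecture and does \emph{not} prove. The paper's positive results are strictly weaker: Theorem~\ref{th:upperbound} gives $\bigl(3-\tfrac{1}{k-1}\bigr)\tfrac{n}{\delta}+O(1)$ for $k$-colorable graphs, Theorem~\ref{posmain} gives $\tfrac{57n}{23\delta}+O(1)$ for $k=3$, and Theorem~\ref{special} attains the conjectured constant $\tfrac{7}{3}$ only under the extra hypothesis that no interior layer is a single. These are obtained by a route very different from your sketch: a structural reduction to \emph{canonical clump graphs} (Theorem~\ref{th:canonical}), followed by an LP--duality packing argument (Theorem~\ref{th:sulyozas}) for the general bound and by sieve estimates fed into a small explicit linear program for the $k=3$ bound. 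Your honest concluding assessment---that the naive coloring/inclusion--exclusion scheme leaks exactly the factor the conjecture demands, and that a genuinely new idea is needed---is consistent with the paper's state of affairs.

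There is, however, a concrete error in the one technical step you do commit to. You assert that for the non-adjacent geodesic pair $v_j,v_{j+2}$, any common neighbor $u$ ``forms a triangle with them,'' hence $v_j$ and $v_{j+2}$ receive distinct colors in every proper $k$-coloring, hence $N(v_j)\cap N(v_{j+2})$ is $(k-2)$-colorable. None of these follow: $v_jv_{j+2}\notin E(G)$ precisely because they are at distance $2$ on a geodesic, so $\{v_j,u,v_{j+2}\}$ is a path, not a triangle; a proper coloring is perfectly free to give $v_j$ and $v_{j+2}$ the \emph{same} color; and then $N(v_j)\cap N(v_{j+2})$ avoids only that single color, i.e.\ is only guaranteed $(k-1)$-colorable. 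The same slack persists under the $K_{k+1}$-free hypothesis: a clique in $N(v_j)\cap N(v_{j+2})$ extends by $v_j$ alone (not by both endpoints, since they are non-adjacent), yielding only $K_k$-freeness of the common neighborhood, not $K_{k-1}$-freeness. So the ``step-$2$'' intersection bound that your chaining argument would need is off by one color throughout, and the window lemma as stated would not reach the density $\tfrac{k\delta}{3k-2}$. This is in fact a useful illustration of why the conjecture is hard: the obvious places to gain a color along a geodesic (pairs at distance $2$) do not actually force a color gain, and the paper instead has to manufacture structure via the canonical-clump reductions before any counting argument can bite.
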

For $k=2r$, Conjecture~\ref{con:7o3} is identical to Conjecture~\ref{con:Erdosetal}\ref{conpart:odd}.
For $k=2r-1$, $3-\frac{2}{k}=\frac{6r-5}{2r-1}$, and, although the conjectured bound is likely not tight for \emph{any} $\delta$,  the fraction
$\frac{6r-5}{2r-1}$ 
cannot be reduced for \emph{all} $\delta$ according to the construction in Section~\ref{sec:counterex}.

For the rest of the paper, we follow the restrictive approach of Czabarka, Dankelman and Sz\'ekely \cite{dankelmanos}, and work towards the weaker version
of Conjecture~\ref{con:7o3}.
 In other words, we use a stronger hypothesis ($k$-colorable instead of $K_{k+1}$-free) than what
Erd\H{o}s, Pach, Pollack, and Tuza  \cite{EPPT} used. In our work towards upper bounds on the diameter, we only assume \emph{minimum degree at least $\delta$}, 
a weaker assumption than \emph{minimum degree  $\delta$}.
Section \ref{sec:clump} shows that \emph{some} $k$-colorable (in particular $3$-colorable) connected graphs realizing the maximum diameter among such graphs
 with given order
and minimum degree have some canonical properties. Hence at proving upper bounds on the diameter, we can assume those canonical properties.

Section \ref{sec:dual} gives a linear programming duality approach to the maximum diameter problem. With this approach, proving upper bounds to the diameter
boils down to solve a  packing problem   in a graph, such that a certain value is reached by the objective function. If a packing with that value is given, the task of checking whether
the  packing is feasible is trivial. Using this approach we obtain
\begin{theorem}\label{th:upperbound} Assume $k\ge 3$. If $G$ is a connected $k$-colorable graph of minimum degree at least $\delta$, then
\[ \diam(G)\leq \frac{3k-4}{k-1}\cdot\frac{n}{\delta}+O(1)=\left(3-\frac{1}{k-1}\right)\frac{n}{\delta}+O(1).\]
\end{theorem}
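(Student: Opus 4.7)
I would start from a diametral path $P=v_0v_1\cdots v_d$ of $G$. The baseline bound $\diam(G)\le \frac{3n}{\delta+1}+O(1)$ of Theorem~\ref{th:ori} uses only that the balls $N[v_0],N[v_3],N[v_6],\ldots$ are pairwise disjoint and each of size at least $\delta+1$. To improve the constant to $3-\frac{1}{k-1}$, the additional input I would exploit from $k$-colorability is that for every vertex $v$ the induced subgraph $G[N(v)]$ is $(k-1)$-colorable; equivalently, $N(v)$ contains an independent set of size at least $\delta/(k-1)$.

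The strategy is to combine the canonicalization of Section~\ref{sec:clump} with the LP-duality framework of Section~\ref{sec:dual}. By Section~\ref{sec:clump} I may replace $G$ by a graph with the same minimum degree, the same chromatic-number bound, and the same order (up to $O(1)$), in which the vertices off $P$ are organized into clumps attached to $P$ in a canonical, periodic fashion. The framework of Section~\ref{sec:dual} then reduces the problem to producing a nonnegative weighting $x_j$ on the path vertices satisfying
\[
\sum_{j\,:\, u\in N(v_j)} x_j\le 1\qquad\text{for every }u\in V(G),
\]
with $\sum_j x_j$ as large as possible. The feasible dual $y_u=1/\delta$ combined with $|N(v_j)|\ge\delta$ immediately gives $\sum_j x_j\le n/\delta$, so it is enough to exhibit a feasible $x$ with $\sum_j x_j\ge \frac{k-1}{3k-4}(d+1)-O(1)$; rearranging then yields the claimed bound on $\diam(G)$.

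Constructing such a packing is the crux. Uniform weights $x_j=\tfrac13$ are feasible (since each off-path vertex lies in at most three consecutive $N(v_j)$), but only achieve $\sum_j x_j=(d+1)/3$ and recover the trivial constant $3$. I would instead design the weights clump-by-clump along the canonical structure: within a single clump the LP becomes a small local LP whose constraints come from the patterns by which clump vertices can lie in several consecutive $N(v_j)$. The $(k-1)$-colorability of each $G[N(v_j)]$ forces the "triple-overlap" part of each clump to have controlled size, because an independent set of size $\delta/(k-1)$ inside $N(v_j)$ cannot be absorbed into the common neighborhoods with $v_{j-1}$ and $v_{j+1}$. This opens up extra slack in the local LP and, I expect, allows block weights averaging $\frac{k-1}{3k-4}$. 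Concatenating the local solutions, using the canonical (periodic) alignment of clumps, yields a global packing with the required total weight.

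The main obstacle is exactly this construction of the local packing and the verification that concatenation preserves feasibility. The canonical clump structure should make the feasibility check essentially local, but reaching the \emph{tight} coefficient $\frac{k-1}{3k-4}$---rather than some weaker constant strictly less than $3$---will require using the independent-set lower bound $\delta/(k-1)$ at its full strength; merely invoking "$N(v)$ is not a clique" (as one would get from $K_{k+1}$-freeness) would not suffice, consistent with the fact that Theorem~\ref{th:upperbound} is stated for $k$-colorable rather than $K_{k+1}$-free graphs.
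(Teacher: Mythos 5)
Your proposal agrees with the paper in two general respects---it invokes the canonicalization of Section~\ref{sec:clump} and the LP-duality framework of Section~\ref{sec:dual}---but the packing LP you set up is not the one the paper uses, and the one you set up is genuinely too weak. The paper's packing places a dual weight $u(y)$ on \emph{every clump} $y$ of the canonical clump graph $H$, with the feasibility constraint $\sum_{y:xy\in E(H)}u(y)\le 1$ for every clump $x$; each layer $L_i$ receives total weight $\tfrac{k-1}{3k-4}$, distributed \emph{across all the clumps of the layer} (equally when $|L_i|\le k-1$, and unequally between the ``fully-joined'' and ``matched'' clumps when $|L_i|=k$). Your LP instead places weights $x_j$ only on the vertices $v_j$ of a diametral path, i.e., on a single clump per layer. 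That restriction is fatal. To recover the paper's per-layer total $\tfrac{k-1}{3k-4}$ you would need $x_j=\tfrac{k-1}{3k-4}$, and then any off-path vertex $u\in L_j$ adjacent to all of $v_{j-1},v_j,v_{j+1}$ (which happens, for example, for $k=3$ whenever the path colors at $j-1$ and $j+1$ coincide and $L_j$ contains the third color---a configuration that occurs persistently in the extremal constructions of Section~\ref{sec:clumpdef} and Section~\ref{sec:counterex}) forces $x_{j-1}+x_j+x_{j+1}\le 1$, which is violated since $3\cdot\tfrac{k-1}{3k-4}>1$. In fact, whenever such triple overlaps occur on a positive fraction of the path, your LP has optimum $\tfrac{D+1}{3}+O(1)$ and only reproduces the trivial bound of Theorem~\ref{th:ori}. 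The paper avoids this exactly by giving the off-path clumps their own dual weight; the crucial slack comes from the canonical properties in Definition~\ref{def:canon}---at most $k$ clumps per layer, no clump adjacent to itself, and a matching structure on missing inter-layer edges when $|L_i|+|L_{i+1}|>k$---not from the independent-set lower bound $\delta/(k-1)$ inside $N(v)$ that you invoke. That heuristic (``the triple-overlap part of the clump has controlled size'') is not what drives the computation, and as stated it is not even correct: an independent set of size $\delta/(k-1)$ in $N(v_j)$ can perfectly well lie entirely in $N(v_{j-1})\cap N(v_{j+1})$ (it just gets a single color not used by $v_{j-1},v_j,v_{j+1}$, which is possible for $k\ge 3$). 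So the ``crux'' you flag as the main obstacle---constructing the local packing---is not merely a missing verification; the object you want to construct does not exist in your restricted LP, and the argument needs to be carried out on the full clump graph as in the paper.
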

This corroborates the conjecture of Erd\H{o}s \emph{et al.} in the  sense that the maximum diameter among all graphs investigated in Theorem~\ref{th:upperbound}  
is $\left(3-\Theta\bigl(\frac{1}{k}\right)\bigl)\frac{n}{\delta}$.
As a corollary, we arrive at 
the conclusion of
Theorem \ref{th:CDS}, if the graph is \emph{3-colorable} (instead of \emph{4-colorable}).  

Section \ref{sec:inex} 
applies  the inclusion-exclusion (sieve) formula   to give upper bounds \emph{locally} for the number of vertices in graphs with the \emph{canonical} properties.
In  Section \ref{sec:opt}, we define a number of global variables that play a role
in the  diameter problem, and turn the upper bounds from  Section \ref{sec:inex}  into
 linear constraints for the global variables.  (This approach was motivated by the flag algebra method of Razborov \cite{Razborov}.)
 A linear program of fixed size for the global variables arises, and solving this linear program proves our main positive result:
 \begin{theorem} \label{posmain}
For every connected $3$-colorable graph $G$ of order $n$ and minimum degree at least  
$\delta\ge 1$,
$$ \diam(G) \leq \frac{57n}{23\delta}+O(1). $$
\label{th:189o76}
\end{theorem}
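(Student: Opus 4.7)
The plan is to execute the machinery developed in Sections~\ref{sec:clump}--\ref{sec:opt} in the special case $k=3$, sharpening Theorem~\ref{th:upperbound} (which for $k=3$ only delivers $\frac{5n}{2\delta}+O(1)$) down to $\frac{57n}{23\delta}+O(1)$. First I would invoke the canonical-form reduction of Section~\ref{sec:clump} to assume that a diameter-maximizing $3$-colorable $G$ of order $n$ and minimum degree at least $\delta$ already possesses its canonical structure. Fix a diametral path $P=v_0v_1\cdots v_d$ and consider the BFS layering $L_0,L_1,\ldots,L_d$ from $v_0$. Each closed neighborhood $N[v_i]$ lies inside $L_{i-1}\cup L_i\cup L_{i+1}$ and has size at least $\delta+1$, so $\sum_i|L_i|=n$ already gives the starting $3n/\delta$-style bound which we must refine.

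Next I would set up the LP-dual framework of Section~\ref{sec:dual}: establishing $\diam(G)\le \frac{57n}{23\delta}+O(1)$ is equivalent to producing a feasible assignment of nonnegative weights to short segments of $P$ that certifies a total-weight-per-diameter ratio of $\frac{23}{57}$. The needed local ingredients come from Section~\ref{sec:inex}, where inclusion-exclusion is applied to the closed neighborhoods $N[v_i],N[v_{i+1}],\ldots,N[v_{i+\ell}]$ of consecutive path vertices. The crucial point is that $G$ is properly $3$-colorable, a condition strictly stronger than $K_4$-freeness: along $P$ the vertex colors must cycle through only three classes, which forces extra structured overlaps among consecutive $N[v_j]$'s and thereby tightens each sieve inequality beyond what mere $K_4$-freeness supplies. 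It is precisely these color-forced overlaps that push the constant below $5/2$.

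Finally, following Section~\ref{sec:opt}, I would aggregate the local bounds into a finite system of linear inequalities on a bounded collection of global density variables---the limit fractions of vertices exhibiting each color-and-distance pattern relative to $P$---and solve the resulting fixed-size linear program, whose optimum I expect to work out to exactly $\frac{57}{23}$. I anticipate the main obstacle will be the preceding step: the family of local configurations fed into Section~\ref{sec:inex} must be rich enough (incorporating enough consecutive stretches together with the three color distinctions) that the LP optimum actually drops from $5/2$ all the way to $57/23$, yet parsimonious enough that an explicit certifying primal-dual pair can be written down and checked. Identifying the right collection of stretches and color patterns, and exhibiting such a certifying LP solution, is the delicate combinatorial heart of the argument; once the list is correct, the remainder is a mechanical finite computation.
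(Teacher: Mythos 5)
Your high-level roadmap---canonical clump structure, local sieve bounds, aggregation into a fixed-size LP whose optimum is $57/23$---does match the paper's architecture, but two parts of your sketch misdescribe the machinery in ways that would derail the argument.

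First, the sieve step. You propose applying inclusion--exclusion to the closed neighborhoods $N[v_i]$ of consecutive vertices along a single diametral path $P$. That uses only \emph{one} vertex per BFS layer, which is exactly what gives the crude $3n/\delta$ bound; it cannot see the extra structure that $3$-colorability supplies. The paper's Section~\ref{sec:inex} instead picks \emph{one representative from each clump} (color class) inside a layer, and then sieves over the neighborhoods of several such representatives drawn from one, two, or three consecutive layers. It is the adjacency of differently-colored vertices within and across layers (guaranteed by saturation and Corollary~\ref{3canon}) that makes the pairwise intersections of these neighborhoods computable, and hence makes the case analysis (Cases~1--8 of Subsection~\ref{threelayers}) go through. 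Restricting to a diametral path would not yield inequalities like \eqref{twopertwo'}, \eqref{twoperfour}, or \eqref{negyszog}, so the LP you would be able to write down afterward would not drop below $5/2$.

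Second, you fold in the Section~\ref{sec:dual} LP-duality framework (nonnegative dual weights on clump-graph vertices, Theorem~\ref{th:sulyozas}) as if it were the mechanism behind Theorem~\ref{posmain}. It is not: that packing argument is what proves Theorem~\ref{th:upperbound}, i.e.\ the $(3-\tfrac1{k-1})\tfrac n\delta$ bound, which for $k=3$ only gives $5/2$. Theorem~\ref{posmain} is proved by the separate Section~\ref{sec:opt} route: the sieve inequalities are summed over $i$, translated into linear constraints on the global densities $\phi=D\delta/n$, $\mu$, $\psi$, $\alpha_1$, $\alpha_2$, and one solves the resulting $5\times 5$ LP directly (with an additional duality argument only to absorb the $O(\delta/n)$ error terms into the $O(1)$). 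Your proposal leaves "identifying the right collection of stretches and color patterns" as the open heart of the argument, but with the two misidentifications above you are pointed toward a family of local configurations that would not produce the needed constraint system.
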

 Note that as $57/23\approx  2.47826...$,   this is an improvement on 
 the  $\frac{5}{2}\cdot\frac{n}{\delta}+O(1)$ upper bound   for 4-colorable graphs  (see Theorem \ref{th:CDS}  cited from \cite{dankelmanos}). 
 In Theorem~\ref{special}, in a \emph{restricted  case} we prove  the weaker version of Conjecture~\ref{con:7o3} for $k=3$. 
 
The first and third authors thank Peter Dankelmann for introducing them to the problem and for suggesting the approach of using $k$-colorability 
instead of forbidden cliques.

\section{Clump Graphs and the Constructions for Conjecture~\ref{con:Erdosetal}}
\label{sec:clumpdef}

Let us be given a $k$-colorable connected graph $G$ of order $n$ and minimum degree at least $\delta$. Let  the eccentricity of vertex $x$ realize the  eccentricity
of the graph $G$,  $\diam(G)$.

Take  a \emph{fixed} good $k$-coloring of $G$. Let \emph{layer} $L_i$ denote the set of vertices at distance $i$ from $x$, and a \emph{clump} in $L_i$ be the set of vertices in $L_i$ that have the same color. The number of layers is  $\diam(G)+1$.

 Let $c(i) \in \{1,2,\ldots, k\}$ denote the number of colors used in layer $L_i$ by our fixed coloration. We can assume without loss of generality that
in $G$, two vertices in layer $L_i$, which are differently colored, are joined by an edge in $G$, and also that two vertices in consecutive layers, which are differently colored, are also joined by an edge in $G$. We call this assumption \emph{saturation}. 
Assuming saturation does not make loss of generality, as  adding these edges does not decrease degrees, keeps the fixed good $k$-coloration, and does not reduce
the diameter, while making the graph more structured for our convenience.

From a graph $G$ above, 
we create a 
\emph{(weighted) clump graph} $H$.  Vertices of $H$ correspond to the clumps of $G$. Two vertices of $H$ are connected by an edge if there were edges between the corresponding clumps in $G$. $H$ is naturally $k$-colored and layered based on the coloration and layering of $G$.  With a slight abuse of notation, we denote the layers of $H$ by
$L_i$ as well. 
We assign as \emph{weights} to each vertex of the clump graph  the number of vertices in the corresponding clump in $G$.

Given a (natural number)-weighted graph $H$, it defines a graph $G$ whose weighted clump graph is $H$  by blowing up vertices of $H$ into as many copies as their weight is. The degrees in $G$ correspond to the sum of the weights of neighbors of the vertices in $H$,  $\diam(G)=\diam(H)$, and the number of vertices in $G$ is the sum of the weights of all vertices in $H$.


It is convenient to describe the constructions of Erd\H{o}s \emph{et al.} \cite{EPPT} in terms of clump graphs. Any two consecutive layers of the clump graphs
will form a complete graph, and, as the order of these complete graphs will be at most $2r-1$ (resp. $2r$), the graphs will be $(2r-1)$-colorable and $K_{2r}$-free
(resp. $2r$-colorable and $K_{2r+1}$-free).

For the construction for $K_{2r}$-free graphs, when
$\delta$ is a multiple of $(r-1)(3r+2)$: 
Layer $L_0$ and has one clump, 
for $1\leq i \leq D$, for every odd $i$, layer  $L_i$ has $r$ clumps,
and for every even $i$, layer  $L_i$ has $r-1$ clumps
The clump in $L_0$ gets weight 1, and 
for $3\leq i \leq D-1$, for every odd $i$, the clumps in layer $L_i$ get and for 
$2\leq i \leq D-1$, for every even $i$, the clumps in layer $L_i$ get 
of weight  $\frac{(r+1)\delta}{(r-1)(3r+2)}$. Use weight $\delta$ for clumps in $L_1$ and 
$L_D$. (In case of $r=1$ and even $D$, use weight $\delta $ in the clumps of $L_{D-1}$ as well.)

For the construction for $K_{2r+1}$-free graphs, when
$\delta$ is a multiple of $3r-1$: 
Layer $L_0$ has one clump,
$1\leq i \leq D$, 
layer  $L_i$ has $r$ clumps.
The clump in $L_0$ gets weight 1, and  clumps in layers $L_i$ for $2\leq i \leq D-1$ get weight $\frac{\delta}{3r-1}$. Use weight $\delta$ for clumps in $L_1$ and 
$L_D$. 

The diameters of these constructions obviously meet the upper bounds of  Conjecture \ref{con:Erdosetal} within a constant term that depends on $r$.


\section{Counterexamples}
\label{sec:counterex}

We will make use of a clump graph to create a $(2r-1)$-colorable (and hence $K_{2r}$-free) graphs with minimum degree $\delta$ for every $r\ge 2$ that refute Conjecture~\ref{con:Erdosetal}~\ref{conpart:even}.

To make our quantities slightly more palatable in the description, we make the shift $s=r-1$, and work with $(2s+1)$-colorable graphs for $s\ge 1$.

For positive integers $p,s$ and $\delta\ge 2s$, we will create a weighted clump graph $H_{s,\delta,p}$ with $p(6s+1)$ layers, such that the number of vertices in two consecutive layers is at most $2s+1$, each vertex is adjacent to 
all other vertices in its own layer
and in the layers immediately before and after it. The layer structure of $H_{s,\delta,p}$ is basically \emph{periodic}, up to a tiny \emph{modification} in the weights. We are going to define a symmetric 
\emph{block} $C_{s,\delta}$ of $6s+1$ layers, and $H_{s,\delta,p}$ is the juxtaposition of $p$ copies of $C_{s,\delta}$, with the modification of increasing by 1 the weight 
of one vertex in the second layer $L_1$ and one vertex in the next-to-last layer $L_{p(6s+1)-1}$.

\begin{figure}[ht]
\centering
\begin{tikzpicture}[-,auto,node distance=2.5cm, semithick]
\tikzstyle{every state}=[fill=none,draw=black,text=black,scale=0.6]
  \node[state] (x0) [draw=white] { };
  \node[state] (1)  [right of=x0] [label=above:$1$] {$Z$};
  \node[state] (21) [above right of=1] [label=above:$\lfloor\frac{\delta-1}{2}\rfloor$] {$X$};
  \node[state] (22) [below right of=1] [label=below:$\lfloor\frac{\delta}{2}\rfloor$] {$Y$};
  \node[state] (3) [below right of=21] [label=above:$\lfloor\frac{\delta}{2}\rfloor$] {$Z$};
  \node[state] (4) [right of=3]  [label=below:$1$] {$Y$};
  \node[state] (5) [right of=4]  [label=above:$\lceil\frac{\delta}{2}\rceil$]  {$X$};
  \node[state] (61) [above right of=5] [label=above:$\lfloor\frac{\delta-1}{2}\rfloor$] {$Y$};
  \node[state] (62) [below right of=5] [label=below:$\lfloor\frac{\delta}{2}\rfloor$] {$Z$};
  \node[state] (7) [below right of=61] [label=above:$1$] {$X$};
  \node[state] (x1) [right of=7] [draw=white] { };

  \path (1)  edge node { } (21) edge node { } (22)
             edge [dashed] node { } (x0)
        (3)  edge node { } (21) edge node { } (22) edge node { } (4)
        (5)  edge node { } (4)  edge node { } (61) edge node { } (62)
        (7)  edge node { } (61) edge node { } (62) 
             edge [dashed] node { } (x1)
        (21) edge node { } (22)
        (61) edge node { } (62);
\end{tikzpicture}
\caption{The repetitive block $C_{1,\delta}$ for the weighted clump graph of the counterexample for $3$-colorable/$K_4$-free graphs. The letters $X,Y,Z$ give a $3$-coloration and the label above the vertex gives the weight of the vertex.} 
\label{fig:CountExOriginal}
\end{figure}
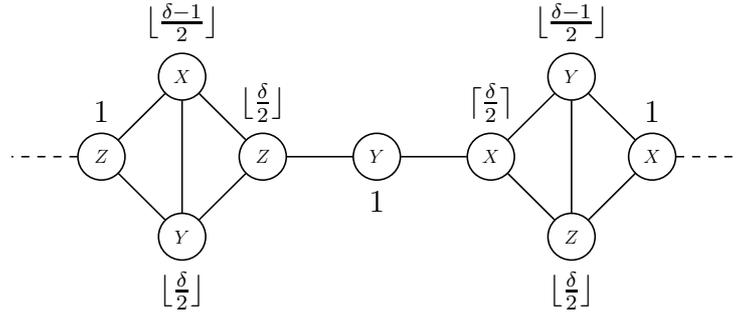
Let $0\leq d\leq 2s-1$ be the \emph{remainder}, when we divide $\delta$
with $2s$.
We  define $C_{s,\delta}$ by  the number of points and their weights in the layers $L_m$ for $0\leq m\leq 3s+1$ as detailed below; 
for $3s+2\leq m \leq 6s$, $L_m$ and the weights  will be the same as in $L_{6s-m}$.  In layers
$L_{3i\pm 1}$, every weight will be $\lfloor\frac{\delta}{2s}\rfloor$ or $\lceil\frac{\delta}{2s}\rceil$ before adjustment, and in layers $L_{3i}$ the weights will be 1. More precisely:
 \begin{enumerate}[label={\upshape (\Alph*)}]
\item For each $i:0\le i\le s$, let the layer $L_{3i}$ contain a single vertex with weight $1$.
\item For each $i:0\le i\le s-1$, let  the layer $L_{3i+1}$ contain $2s-i$ 
vertices, and assign them the following weights:
\begin{enumerate}[label={\upshape(\alph*)}]
\item If $d=0$,  let the weight of each of these vertices be $\frac{\delta}{2s}$. The adjustment is that   for a single  vertex in $L_1$, whose  weight is reduced to  $\frac{\delta}{2s}-1$. (By symmetry, the same adjustment happens in $L_{6s-1}$.)
\item If $d\geq 1$, then  let $\min(2s-i,d-1)$ vertices have weight $\lceil\frac{\delta}{2s}\rceil$, and the rest have weight $\lfloor\frac{\delta}{2}\rfloor$.
\end{enumerate}
\item For each $i:0\leq i< s-1$, let the layer $L_{3i+2}$ contain $i+1$ vertices, and assign them the following weights:
\begin{enumerate}[label={\upshape(\alph*)}]
\item If $d=0$, let the weight of each of them be $\frac{\delta}{2s}$.
\item If $1\leq d$, then let $d-\min(2s-i-1,d-1)$
vertices have weight $\lceil\frac{\delta}{2s}\rceil$, and the rest gets weight $\lfloor\frac{\delta}{2s}\rfloor$.
(This weight assignment is feasible. Note that $L_{3i+2}$ contains $i+1$ vertices, and, as  $d\le 2s-1$, $1\le d-\min(2s-i-1,d-1)\le i$). 
\end{enumerate}
\item Let layer $L_{3s-1}$ (and  symmetrically layer $L_{3s+1}$) have   $s$ vertices each. In these layers, let
$\lfloor\frac{d}{2}\rfloor$ vertices (resp.  $\lceil\frac{d}{2}\rceil$ vertices) have  weight $\lceil\frac{\delta}{2s}\rceil$,  and the remaining  vertices get weight $\lfloor\frac{\delta}{2s}\rfloor$. 
(This weight assignment is feasible. Since  $d\le 2s-1$, $\lceil\frac{d}{2}\rceil \le s$.)
\end{enumerate}
Note  that $\min(d-1,2s-i)=d-1$ for $i\in\{0,1,2\}$. We use this minimization for $i\leq s-2$. When
$s\leq 4$, we have $s-2\leq 2$, consequently there is no need to use the minimization formula for $s\leq 4$.
Therefore we show $C_{5,\delta}$ in Figure~\ref{fig:LargeCountEx}, which is the first instance to show all complexities of the counterexamples.  The case  $s=1$, when  $d\in\{0,1\}$, is even simpler: it is possible to describe  the weights without reference to $d$, see Figure~\ref{fig:CountExOriginal} for $C_{1,\delta}$.

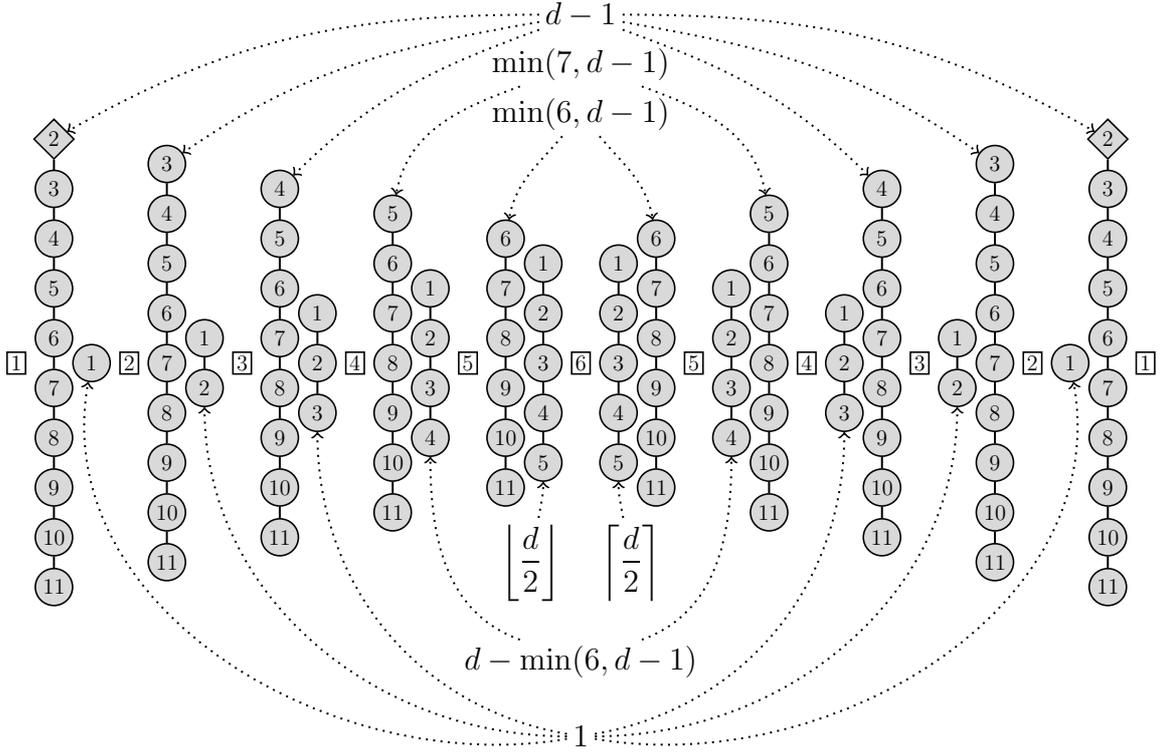
\begin{figure}[http]
		\centering 
			\begin{tikzpicture}
			[scale=.33,xscale=1,inner sep=2pt,semithick,
			vertexb/.style={circle,draw,fill=gray!30,minimum height=7mm, scale=.7},
			vertex/.style={rectangle,draw,minimum size=3mm,scale=.7},
			vertexc/.style={diamond,draw,fill=gray!30,minimum size=7mm,scale=.7},			
			thickedge/.style={line width=0.75pt},
			] 		
  \node[vertex] (x0) at (-24,0) [draw=white] { };
   \node[vertex] (0) at (-22.5,0) {$1$}; 
   \node[vertexc] (1a) at (-21,9) {$2$};
    \node[vertexb] (1b) at (-21,7) {$3$};    
    \node[vertexb] (1c) at (-21,5) {$4$};        
    \node[vertexb] (1d) at (-21,3) {$5$};    
     \node[vertexb] (1e) at (-21,1) {$6$};       
     \node[vertexb] (1f) at (-21,-1) {$7$}; 
     \node[vertexb] (1g) at (-21,-3) {$8$};         
     \node[vertexb] (1h) at (-21,-5) {$9$};     
     \node[vertexb] (1i) at (-21,-7) {$10$};         
     \node[vertexb] (1j) at (-21,-9) {$11$};                         
  \node[vertexb] (2) at (-19.5,0) {$1$};  
   \node[vertex] (3) at (-18,0) {$2$}; 
   \node[vertexb] (4a) at (-16.5,8) {$3$};
    \node[vertexb] (4b) at (-16.5,6) {$4$};    
    \node[vertexb] (4c) at (-16.5,4) {$5$};        
    \node[vertexb] (4d) at (-16.5,2) {$6$};    
    \node[vertexb] (4e) at (-16.5,0) {$7$};     
    \node[vertexb] (4f) at (-16.5,-2) {$8$};        
    \node[vertexb] (4g) at (-16.5,-4) {$9$};      
    \node[vertexb] (4h) at (-16.5,-6) {$10$};    
    \node[vertexb] (4i) at (-16.5,-8) {$11$};                    
  \node[vertexb] (5a) at (-15,1) {$1$};
  \node[vertexb] (5b) at (-15,-1) {$2$};
    \node[vertex] (6) at (-13.5,0) {$3$};
    \node[vertexb] (7a) at (-12,7) {$4$};
    \node[vertexb] (7b) at (-12,5) {$5$};    
    \node[vertexb] (7c) at (-12,3) {$6$};      
    \node[vertexb] (7d) at (-12,1) {$7$}; 
    \node[vertexb] (7e) at (-12,-1) {$8$};     
    \node[vertexb] (7f) at (-12,-3) {$9$};        
    \node[vertexb] (7g) at (-12,-5) {$10$};    
    \node[vertexb] (7h)at (-12,-7) {$11$};                                   
    \node[vertexb] (8a) at (-10.5,2) {$1$};
    \node[vertexb] (8b) at (-10.5,0) {$2$};    
    \node[vertexb] (8c) at (-10.5,-2) {$3$};        
  \node[vertex] (9) at (-9,0) {$4$};
    \node[vertexb] (10a) at (-7.5,6) {$5$};
    \node[vertexb] (10b) at (-7.5,4) {$6$};    
    \node[vertexb] (10c) at (-7.5,2) {$7$};      
    \node[vertexb] (10d) at (-7.5,0) {$8$};  
    \node[vertexb] (10e) at (-7.5,-2) {$9$};       
     \node[vertexb] (10f) at (-7.5,-4) {$10$};     
      \node[vertexb] (10g) at (-7.5,-6) {$11$};                        
    \node[vertexb] (11a) at (-6,3) {$1$};
    \node[vertexb] (11b) at (-6,1) {$2$};    
    \node[vertexb] (11c) at (-6,-1) {$3$};  
    \node[vertexb] (11d) at (-6,-3) {$4$}; 
  \node[vertex] (12) at (-4.5,0) {$5$};
    \node[vertexb] (13a) at (-3,5) {$6$};
    \node[vertexb] (13b) at (-3,3) {$7$};    
    \node[vertexb] (13c) at (-3,1) {$8$};      
    \node[vertexb] (13d) at (-3,-1) {$9$};  
    \node[vertexb] (13e) at (-3,-3) {$10$};    
    \node[vertexb] (13f) at (-3,-5) {$11$};                       
    \node[vertexb] (14a) at (-1.5,4) {$1$};
    \node[vertexb] (14b) at (-1.5,2) {$2$};    
    \node[vertexb] (14c) at (-1.5,0) {$3$};  
    \node[vertexb] (14d) at (-1.5,-2) {$4$}; 
    \node[vertexb] (14e) at (-1.5,-4) {$5$};     
   \node[vertex] (15) at (0,0) {$6$};			
  \node[vertex] (x1) at (24,0) [draw=white] { };
   \node[vertex] (30) at (22.5,0) {$1$}; 
   \node[vertexc] (29a) at (21,9) {$2$};
    \node[vertexb] (29b) at (21,7) {$3$};    
    \node[vertexb] (29c) at (21,5) {$4$};        
    \node[vertexb] (29d) at (21,3) {$5$};    
     \node[vertexb] (29e) at (21,1) {$6$};       
     \node[vertexb] (29f) at (21,-1) {$7$}; 
     \node[vertexb] (29g) at (21,-3) {$8$};         
     \node[vertexb] (29h) at (21,-5) {$9$};     
     \node[vertexb] (29i) at (21,-7) {$10$};         
     \node[vertexb] (29j) at (21,-9) {$11$};                         
  \node[vertexb] (28) at (19.5,0) {$1$};  
   \node[vertex] (27) at (18,0) {$2$}; 
   \node[vertexb] (26a) at (16.5,8) {$3$};
    \node[vertexb] (26b) at (16.5,6) {$4$};    
    \node[vertexb] (26c) at (16.5,4) {$5$};        
    \node[vertexb] (26d) at (16.5,2) {$6$};    
    \node[vertexb] (26e) at (16.5,0) {$7$};     
    \node[vertexb] (26f) at (16.5,-2) {$8$};        
    \node[vertexb] (26g) at (16.5,-4) {$9$};      
    \node[vertexb] (26h) at (16.5,-6) {$10$};    
    \node[vertexb] (26i) at (16.5,-8) {$11$};                    
  \node[vertexb] (25a) at (15,1) {$1$};
  \node[vertexb] (25b) at (15,-1) {$2$};
    \node[vertex] (24) at (13.5,0) {$3$};
    \node[vertexb] (23a) at (12,7) {$4$};
    \node[vertexb] (23b) at (12,5) {$5$};    
    \node[vertexb] (23c) at (12,3) {$6$};      
    \node[vertexb] (23d) at (12,1) {$7$}; 
    \node[vertexb] (23e) at (12,-1) {$8$};     
    \node[vertexb] (23f) at (12,-3) {$9$};        
    \node[vertexb] (23g) at (12,-5) {$10$};    
    \node[vertexb] (23h)at (12,-7) {$11$};                                   
    \node[vertexb] (22a) at (10.5,2) {$1$};
    \node[vertexb] (22b) at (10.5,0) {$2$};    
    \node[vertexb] (22c) at (10.5,-2) {$3$};        
  \node[vertex] (21) at (9,0) {$4$};
    \node[vertexb] (20a) at (7.5,6) {$5$};
    \node[vertexb] (20b) at (7.5,4) {$6$};    
    \node[vertexb] (20c) at (7.5,2) {$7$};      
    \node[vertexb] (20d) at (7.5,0) {$8$};  
    \node[vertexb] (20e) at (7.5,-2) {$9$};       
     \node[vertexb] (20f) at (7.5,-4) {$10$};     
      \node[vertexb] (20g) at (7.5,-6) {$11$};                        
    \node[vertexb] (19a) at (6,3) {$1$};
    \node[vertexb] (19b) at (6,1) {$2$};    
    \node[vertexb] (19c) at (6,-1) {$3$};  
    \node[vertexb] (19d) at (6,-3) {$4$}; 
  \node[vertex] (18) at (4.5,0) {$5$};
    \node[vertexb] (17a) at (3,5) {$6$};
    \node[vertexb] (17b) at (3,3) {$7$};    
    \node[vertexb] (17c) at (3,1) {$8$};      
    \node[vertexb] (17d) at (3,-1) {$9$};  
    \node[vertexb] (17e) at (3,-3) {$10$};    
    \node[vertexb] (17f) at (3,-5) {$11$};                       
    \node[vertexb] (16a) at (1.5,4) {$1$};
    \node[vertexb] (16b) at (1.5,2) {$2$};    
    \node[vertexb] (16c) at (1.5,0) {$3$};  
    \node[vertexb] (16d) at (1.5,-2) {$4$}; 
    \node[vertexb] (16e) at (1.5,-4) {$5$};

     \node(b1) at (0,14) {$d-1$};        
      \node(b4) at (0,10) {$\min(6,d-1)$};            
      \node(b7) at (0,12) {$\min(7,d-1)$};              
     \node(b2) at (0,-15) {$1$};                    
      \node(b8a) at (-2,-8) {$\displaystyle\left\lfloor\frac{d}{2}\right\rfloor$};          
       \node(b8b) at (2,-8) {$\displaystyle\left\lceil\frac{d}{2}\right\rceil$};                       
      \node(b10) at (0,-12) {$d-\min(6,d-1)$};            

\draw[thickedge] (1a)--(1b)--(1c)--(1d)--(1e)--(1f)--(1g)--(1h)--(1i)--(1j);
\draw[thickedge] (29a)--(29b)--(29c)--(29d)--(29e)--(29f)--(29g)--(29h)--(29i)--(29j);
\draw[thickedge] (4a)--(4b)--(4c)--(4d)--(4e)--(4f)--(4g)--(4h)--(4i);
\draw[thickedge] (26a)--(26b)--(26c)--(26d)--(26e)--(26f)--(26g)--(26h)--(26i);
\draw[thickedge] (5a)--(5b);
\draw[thickedge] (25a)--(25b);
\draw[thickedge] (7a)--(7b)--(7c)--(7d)--(7e)--(7f)--(7g)--(7h);
\draw[thickedge] (23a)--(23b)--(23c)--(23d)--(23e)--(23f)--(23g)--(23h);
\draw[thickedge] (8a)--(8b)--(8c);
\draw[thickedge] (22a)--(22b)--(22c);
\draw[thickedge] (10a)--(10b)--(10c)--(10d)--(10e)--(10f)--(10g);
\draw[thickedge] (20a)--(20b)--(20c)--(20d)--(20e)--(20f)--(20g);
\draw[thickedge] (11a)--(11b)--(11c)--(11d);
\draw[thickedge] (19a)--(19b)--(19c)--(19d);
\draw[thickedge] (13a)--(13b)--(13c)--(13d)--(13e)--(13f);
\draw[thickedge] (17a)--(17b)--(17c)--(17d)--(17e)--(17f);
\draw[thickedge] (14a)--(14b)--(14c)--(14d)--(14e);
\draw[thickedge] (16a)--(16b)--(16c)--(16d)--(16e);

\draw[->,thickedge,dotted] (b1) to [out=180,in=30] (1a);
\draw[->,thickedge,dotted] (b1) to [out=190, in=35] (4a);
\draw[->,thickedge,dotted] (b1) to [out=200, in=45] (7a);
\draw[->,thickedge,dotted] (b7) to [out=200, in=80] (10a);
\draw[->,thickedge,dotted] (b4) to [out=230, in=80] (13a);
\draw[->,thickedge,dotted] (b4) to [out=-50, in=100] (17a);
\draw[->,thickedge,dotted] (b7) to [out=-20, in=100] (20a);
\draw[->,thickedge,dotted] (b1) to [out=-20, in=135] (23a);
\draw[->,thickedge,dotted] (b1) to [out=-10, in=145] (26a);
\draw[->,thickedge,dotted] (b1) to [out=0, in=150] (29a);

\draw[->,thickedge,dotted] (b2) to [out=190,in=260] (2);
\draw[->,thickedge,dotted] (b2) to [out=180,in=270] (5b);
\draw[->,thickedge,dotted] (b2) to [out=170,in=270] (8c);
\draw[->,thickedge,dotted] (b10) to [out=160, in=270]   (11d);
\draw[->,thickedge,dotted] (b8a) to [out=80, in=270] (14e);
\draw[->,thickedge,dotted] (b8b) to [out=100, in=270] (16e);
\draw[->,thickedge,dotted] (b10) to [out=20, in=270]  (19d);
\draw[->,thickedge,dotted] (b2) to [out=10,in=270] (22c);
\draw[->,thickedge,dotted] (b2) to [out=0, in=270] (25b);
\draw[->,thickedge,dotted] (b2) to [out=-10,in=280] (28);
\end{tikzpicture}
\caption{The repetitive block $C_{5,\delta}$ of the weighted clump graph of the  for $11$-colorable/$K_{12}$-free  counterexample graphs. 
The vertices within a  layer are connected with a vertical line. Two vertices are connected, if they are in the same layer or in consecutive layers.
The numbers in the vertices give a good $11$-coloration. Before adjustment,
white rectangular vertices have weight $1$ and gray vertices have either weight $\lceil\frac{\delta}{10}\rceil$ or weight
$\lfloor\frac{\delta}{10}\rfloor$; and the numbers, from which  dotted arrows point to columns, give the number of vertices in the column that have weight 
$\lceil\frac{\delta}{10}\rceil$. Recall $d=\delta-10\lfloor\frac{\delta}{10}\rfloor$. 
The adjustment: if $d=0$, the weight of the two diamond shaped vertices are decreased by $1$.
} 
\label{fig:LargeCountEx}
\end{figure}

\begin{lemma}\label{lm:clumpH} Let $p\ge 1$ and $s\ge 2$. The weighted clump graph $H_{s,\delta,p}$ has the following properties:
\begin{enumerate}[label={\upshape (\alph*)}]
\item\label{part:Htriv} $H_{s,\delta,p}$ is $(2s+1)$-colorable with diameter $p(6s+1)-1$.
\item\label{part:Htotal} The sum of the weights of all  vertices is $p\bigl((2s+1)\delta+2s-1\bigl)+2$.
\item\label{part:Hmindeg} For any vertex $y\in V(H_{s,\delta,p})$, the sum of the weights of its neighbors is at least $\delta$.
\end{enumerate}
\end{lemma}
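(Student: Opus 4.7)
The plan is to verify parts~\ref{part:Htriv}, \ref{part:Htotal}, and \ref{part:Hmindeg} in order, exploiting the mirror symmetry of the block $C_{s,\delta}$ about its central layer $L_{3s}$ together with the periodic structure of $H_{s,\delta,p}$.

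For part~\ref{part:Htriv}, there are $p(6s+1)$ layers, and since every edge joins vertices in the same or consecutive layers, the distance from the unique vertex of $L_0$ to any vertex of $L_{p(6s+1)-1}$ equals exactly $p(6s+1)-1$. For the chromatic number, I would run through the cases (A)--(D) to verify that the union of any two consecutive layers of $C_{s,\delta}$ contains at most $2s+1$ vertices, with equality on the pairs $(L_{3i+1},L_{3i+2})$ for $0\le i\le s-1$ and on their mirror images. Since saturation makes each such pair into a clique, $\chi(H_{s,\delta,p})\ge 2s+1$, and an explicit $(2s+1)$-coloring is displayed in Figures~\ref{fig:CountExOriginal} and~\ref{fig:LargeCountEx}: in the first half of the block, I would assign colors $\{i+2,\ldots,2s+1\}$ to the $2s-i$ vertices of $L_{3i+1}$, colors $\{1,\ldots,i+1\}$ to the $i+1$ vertices of $L_{3i+2}$, and a triangular pattern to the singletons $L_{3i}$ so that adjacent triple-layers do not clash; the second half is colored by mirror symmetry. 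Since $L_0$ and $L_{6s}$ of each block carry color~$1$ while the adjacent layer only uses colors from $\{2,\ldots,2s+1\}$, the colorings glue consistently at block boundaries.

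For part~\ref{part:Htotal}, set $q=\lfloor\delta/(2s)\rfloor$ and $d=\delta-2sq\in\{0,1,\ldots,2s-1\}$. I would compute the weight of a single block $C_{s,\delta}$ by grouping its $4s$ heavy layers into $2s$ pairs of $2s+1$ vertices each: the $s$ pairs $(L_{3i+1},L_{3i+2})$ for $0\le i\le s-1$ (where for $i=s-1$ the role of $L_{3i+2}$ is played by $L_{3s-1}$ from (D)), together with their $s$ mirror images. For $d\ge 1$, a case analysis based on (B)(b), (C)(b), (D), and the identity $\lfloor d/2\rfloor+\lceil d/2\rceil=d$ shows that the block contains exactly $(2s+1)d-2$ vertices of weight $q+1$, the remaining heavy vertices having weight $q$; for $d=0$ all heavy vertices have weight $q$, but the within-block $-1$ adjustments in $L_1$ and $L_{6s-1}$ produce the same $-2$ correction. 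In either case the heavy vertices contribute $2s(2s+1)q+(2s+1)d-2$ to the total weight, which combined with the $2s+1$ singletons of weight~$1$ gives a block total of $(2s+1)\delta+2s-1$. Multiplying by $p$ and adding $2$ for the two external $+1$ adjustments in $L_1$ and $L_{p(6s+1)-1}$ yields the claimed sum.

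For part~\ref{part:Hmindeg}, for $y\in L_m$ the sum of the weights of $y$'s neighbors equals $W(L_{m-1})+W(L_m)+W(L_{m+1})-\mathrm{wt}(y)$, where $W(L_i)$ denotes the total weight of layer $L_i$ (with $W(L_{-1})=W(L_{p(6s+1)})=0$). Adapting the per-pair computation from part~\ref{part:Htotal}, I would verify that every interior triple of consecutive layers has total weight exactly $\delta+\max_{v\in L_m}\mathrm{wt}(v)$, so that subtracting $\mathrm{wt}(y)$ leaves at least $\delta$, with equality precisely when $y$ is the heaviest vertex of its layer. The argument splits into cases by $m\bmod 3$ and by the position of $m$ relative to the middle layer $3s$; the special layers $L_{3s-1}$ and $L_{3s+1}$ treated asymmetrically by (D) require separate checking because their weight sums involve $\lfloor d/2\rfloor$ versus $\lceil d/2\rceil$. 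The main obstacle is the boundary: for $m\in\{0,1,p(6s+1)-2,p(6s+1)-1\}$ an adjacent layer is missing, but the external $+1$ adjustment exactly compensates---for example, $y\in L_0$ has neighbor weight sum $W(L_1)=(\delta-1)+1=\delta$, and the right end is handled symmetrically.
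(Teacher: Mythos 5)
Your proposal follows essentially the same structure as the paper's proof: part~\ref{part:Htriv} via the observation that any two consecutive layers contain at most $2s+1$ vertices (the paper simply colors greedily; your explicit coloring, while more detailed, is equivalent for the purpose of the lemma), part~\ref{part:Htotal} via decomposing the block's total weight, and part~\ref{part:Hmindeg} by comparing neighborhood sums to $\delta$. The bookkeeping in~\ref{part:Htotal} is organized slightly differently: the paper groups the \emph{non-consecutive} pairs $L_{3i-1}\cup L_{3i+1}$, each of which contains exactly $d$ vertices of weight $\lceil\delta/(2s)\rceil$, making the per-triple sum $\delta+1$ immediate, whereas you count the total number of weight-$(q+1)$ vertices across the whole block, obtaining $(2s+1)d-2$; both give the same block total. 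One inaccuracy in~\ref{part:Hmindeg}: it is not true that every interior triple has total weight \emph{exactly} $\delta+\max_{v\in L_m}\mathrm{wt}(v)$. For example, when $d=1$ the triple centered on $L_{3i+1}$ (for $0\le i\le s-2$) has total weight $1+2sq+(q+1)=(2s+1)q+2$, while $\delta+\max_{v\in L_{3i+1}}\mathrm{wt}(v)=(2sq+1)+q=(2s+1)q+1$. What the argument needs, and what does hold, is the inequality $W(L_{m-1})+W(L_m)+W(L_{m+1})\ge\delta+\max_{v\in L_m}\mathrm{wt}(v)$; with that replacement your uniform triple-sum reduction is a clean repackaging of the paper's case-by-case verification, which for heavy layers instead counts $y$'s $2s$ in-pair neighbors plus its single weight-$1$ out-of-pair neighbor directly.
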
  

\begin{proof}  ~\ref{part:Htriv} The statement on the diameter  is trivial.
As the number of vertices in any two consecutive layer of $H_{s,\delta,p}$ is at most $2s+1$, we can $(2s+1)$-color $H_{s,\delta,p}$ with $(2s+1)$ colors from left to right greedily. 

 \ref{part:Htotal} If $W$ is the sum the weights of vertices in the  block $C_{s,\delta}$, then the total sum of weights in $H_{s,\delta,p}$  is $pW+2$ (the 2 is due to the modification), so
we need to show that $W=(2s+1)\delta+2s-1$.

Consider an $i$ with $0<i<s-1$. $L_{3i-1}\cup L_{3i+1} $ has $(i-1)+1+2s-i=2s$ vertices. If $d=0$, each of them has weight $\frac{\delta}{2s}$,
otherwise $d-\min(d-1,2s-i)+\min(d-1,2s-i)=d$ of them have weight $\lceil\frac{\delta}{2s}\rceil$, and the rest have weight
$\floor{\frac{\delta}{2s}}$. 
So the sum of the weight of the vertices in $L_{3i-1}\cup L_{3i}\cup L_{3i+1} $ is  $\delta+1$, and so is in $L_{6s-3i+1}\cup L_{6s-3i}\cup L_{6s-3i-1}$.

$L_{3s-1}\cup L_{3s+1}$ contains $2s$ vertices, $\floor{\frac{d}{2}}+\ceil{\frac{d}{2}}=d$ of them has weight $\ceil{\frac{\delta}{2s}}$, the rest
$\floor{\frac{\delta}{2s}}$, so the sum of the weights of the vertices in
$L_{3s-1}\cup L_{3s}\cup L_{3s+1}$ is also $\delta+1$.

$L_1$ has $2s$ vertices. If $d=0$, one of these have weight $\frac{\delta}{2s}-1$  
and the rest have weight $\frac{\delta}{2s}$.
If $d>0$, $d-1$ of the vertices have weight $\lceil\frac{\delta}{2s}\rceil$, the rest has weight $\floor{\frac{\delta}{2s}}$.
 The sum of the weights 
in $L_0\cup L_1$ is $1+\delta-1=\delta$.

So
$W=2\delta+(2s-1)(\delta+1)=(2s+1)\delta+2s-1$, which finishes the proof of \ref{part:Htotal}.

For  \ref{part:Hmindeg}: Let $y$ be a vertex of $H_{s,\delta,p}$.
Then for some $j$ $(0\le j<p)$, $y$ is in the $j$-th block $C_{s,\delta}$,  and for some $m$ $(0\leq m \leq 6s)$, $y$ is in 
 the layer $L_m$ of $C_{s,\delta}$. Because of symmetry, we may assume that $0\leq m\leq 3s$. 
 The weights in the layer $L_{3s-1}$ are less or equal than the weights in the layer $L_{3s+1}$,
but may not be equal, breaking the symmetry, but still handling cases with $0\leq m  \leq 3s$ gives a $\delta$ lower bound to the degrees of all vertices of 
$H_{s,\delta,p}$. In addition, layer $(j,m)= (p-1,6s-1)$, where a modification happened,  is symmetric to the layer  $(j,m)=(0,1)$, where identical modification happened. 
Therefore checking the degrees of the vertices in the first half of the first (and modified) copy of  $C_{s,\delta}$  in $H_{s,\delta,p}$ covers checking the degrees in the
second half of the last (and modified) copy of  $C_{s,\delta}$  in $H_{s,\delta,p}$.

If  $y\in L_{3i}$ for some $0<i\le 2s$, then  $y$ has weight $1$ and is adjacent to all vertices but itself in $L_{3i-1}\cup L_{3i}\cup L_{3i+1} $.  As we have already shown  in  the proof of part \ref{part:Htotal},    $L_{3i-1}\cup L_{3i}\cup L_{3i+1} $  has total weight $\delta+1$, the neighbors of $y$ have total weight $\delta$.

If $y\in L_0$, then as we showed in    the proof of part \ref{part:Htotal},        the total weight of the vertices in $L_0$ in an unmodified block, which is not the first or the last block,  is $\delta-1$.
Either $y$ is adjacent  
to a vertex of weight $1$ outside of its own block, or $y$ is in a modified block where the total weight of $L_1$ got increased by $1$: in both cases
the sum of the weights of the neighbors of $y$ is $\delta$. 

Assume now that $y$ is a vertex of $L_{3i+1}\cup L_{3i+2}$ for some $0\le i\le s-1$.
 Note  $L_{3i+1}\cup L_{3i+2}$ contains $2s+1$ vertices, $2s$ of which is the neighbor of
$y$, plus $y$ has a neighbor of weight $1$ outside of  $L_{3i+1}\cup L_{3i+2}$. We consider two cases for $d$:

If $d=0$   and $0< i\le s-1$, then each neighbor  of $y$ in $L_{3i+1}\cup L_{3i+2}$ has weight $\frac{\delta}{2s}$, so the sum of the weights of the neighbors of $y$ is $\delta+1$.   If $d=0$   and $i=0$,   because of the adjustment,  the sum of the weights of the neighbors may  decrease by $1$, and is still  $\geq \delta$.

If $d>0$, then all vertices in $L_{3i+1}\cup L_{3i+2}$ have weight at least $\lfloor\frac{\delta}{2s}\rfloor$. 
If $i<s-1$, then  at least
$\min(d-1,2s-i)+d-\min(d-1,2s-i-1)\ge d$ many vertices of $L_{3i+1}\cup L_{3i+2}$ have weight $\lceil\frac{\delta}{2s}\rceil$. 
If $i=s-1$, then, as $s+1>\lceil\frac{d}{2}\rceil$, we have that  $\lfloor\frac{d}{2}\rfloor\ge\max(1,d-(s+1))$,
so  $L_{3s-2}\cup L_{3s-1}$ has at least $\min(d-1,s+1)+\floor{\frac{d}{2}}=d-\max(1,d-(s+1))+\floor{\frac{d}{2}}\ge d$ vertices with weight $\lceil\frac{\delta}{2s}\rceil$. 
Therefore, for any $0\leq i\leq s-1$, any $y\in   L_{3i+1}\cup L_{3i+2}$ has at least
$d-1$ neighbors of weight $\floor{\frac{\delta}{2s}}+1$, the total weight of $y$'s neighbors is at least
$2s\lfloor\frac{\delta}{2s}\rfloor+d-1+1=\delta$. This finishes the proof of ~\ref{part:Hmindeg}.
\end{proof}

\begin{theorem} Let $r\ge 2$, $\delta\ge 2r-2$, and for each positive integer $p$, let $G_{r,\delta,p}$ be the graph whose weighted clump graph is
$H_{r-1,\delta,p}$. Then $G_{r,\delta,p}$ is $2r-1$ colorable (and hence $K_{2r}$-free), connected, with minimum degree  $\delta$, 
of order $n=p\bigl((2r-1)\delta+2r-3\bigl)+2$,
and of diameter  $\frac{(6r-5)n}{(2r-1)\delta+2r-3}+O(1)$. Consequently, \emph{ Conjecture~\ref{con:Erdosetal}} fails for every $\delta> 12r^3-22r^2-2r+12=2(r-1)(3r+2)(2r-3)$. Furthermore, the difference between the coefficient of $\frac{n}{\delta} $ in our construction and in \emph{ Conjecture~\ref{con:Erdosetal}\ref{conpart:even}} is 
$\frac{1}{(2r^2-1)(2r-1)}+o(1)$, as $\delta\rightarrow\infty$.
\end{theorem}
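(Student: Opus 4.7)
The plan is to deduce all claims from Lemma~\ref{lm:clumpH} by setting $s = r-1$ and translating between the weighted clump graph $H_{r-1,\delta,p}$ and the blow-up graph $G_{r,\delta,p}$. I would rely on the four basic facts about blow-ups recorded in Section~\ref{sec:clumpdef}: they preserve diameter and connectedness, lift any proper $k$-coloring of $H$ to a proper $k$-coloring of $G$, identify the order of $G$ with the sum of vertex weights in $H$, and identify the degree of each blow-up copy with the sum of the weights of the corresponding clump's neighbors in $H$. Parts~\ref{part:Htriv}--\ref{part:Hmindeg} of the lemma then immediately give that $G_{r,\delta,p}$ is $(2r-1)$-colorable (hence $K_{2r}$-free), connected, of order $n = p\bigl((2r-1)\delta + 2r-3\bigr) + 2$, and of minimum degree at least $\delta$. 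The minimum degree is in fact exactly $\delta$, witnessed by any weight-$1$ vertex in a layer $L_{3i}$ with $0 < i \leq 2s$, whose neighbors' weights sum to exactly $\delta$ as analyzed in the proof of Lemma~\ref{lm:clumpH}\ref{part:Hmindeg}.

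For the diameter, Lemma~\ref{lm:clumpH}\ref{part:Htriv} gives $\diam(G_{r,\delta,p}) = \diam(H_{r-1,\delta,p}) = p(6r-5) - 1$; solving the order equation for $p$ converts this at once into the claimed
\[
\diam(G_{r,\delta,p}) = \frac{(6r-5)(n-2)}{(2r-1)\delta + 2r-3} - 1 = \frac{(6r-5)n}{(2r-1)\delta + 2r-3} + O(1).
\]

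The remaining two quantitative claims --- the counterexample threshold and the asymptotic coefficient gap --- both reduce to one algebraic identity, which I expect to be the only calculation of substance in the whole argument:
\[
(6r-5)(2r^2-1) - 2(r-1)(3r+2)(2r-1) = 1.
\]
Comparing our leading coefficient with the conjectured one, the inequality $\frac{6r-5}{(2r-1)\delta + 2r-3} > \frac{2(r-1)(3r+2)}{(2r^2-1)\delta}$ becomes, after clearing denominators and invoking this identity, exactly $\delta > 2(r-1)(3r+2)(2r-3) = 12r^3 - 22r^2 - 2r + 12$, which is the advertised threshold. The same identity rewrites the asymptotic coefficient gap as $\frac{6r-5}{2r-1} - \frac{2(r-1)(3r+2)}{2r^2-1} = \frac{1}{(2r-1)(2r^2-1)}$, yielding the final $o(1)$ statement.

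The only genuine obstacle is the edge case $r = 2$ (equivalently $s = 1$), since Lemma~\ref{lm:clumpH} is stated for $s \geq 2$; here I would instead verify the three structural properties directly from the explicit picture of $C_{1,\delta}$ in Figure~\ref{fig:CountExOriginal}, which is routine by inspection. Everything else is bookkeeping on top of the lemma.
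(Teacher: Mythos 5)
Your proposal is correct and follows essentially the same route as the paper: quote Lemma~\ref{lm:clumpH}, translate its three parts to the blow-up $G_{r,\delta,p}$, and finish with the single algebraic identity $(6r-5)(2r^2-1) - 2(r-1)(3r+2)(2r-1) = 1$, which the paper packages as the displayed rational identity. Your observation about the edge case $r=2$ (i.e., $s=1$) is a genuine and worthwhile catch: the paper's proof simply cites Lemma~\ref{lm:clumpH}, which is stated only for $s\geq 2$, even though the theorem is claimed for all $r\geq 2$; one must either verify $C_{1,\delta}$ directly from Figure~\ref{fig:CountExOriginal} as you propose, or observe that the proof of the lemma in fact goes through for $s=1$ once the vacuous index ranges are interpreted correctly.
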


\begin{proof} By Lemma~\ref{lm:clumpH}, $G_{r,\delta,p}$
is $(2r-1)$-colorable with minimum degree $\delta$, diameter $p(6r-5)-1$, and it has  
$n=p\bigl((2r-1)\delta+2r-3\bigl)+2$ vertices. Therefore its diameter is
$\frac{(6r-5)(n-2)}{(2r-1)\delta+2r-3}-1=\frac{(6r-5)n}{(2r-1)\delta+2r-3}+O(1)$.
Consider the identity
\[
\frac{(6r-5)\delta}{(2r-1)\delta+2r-3}-
\frac{2(r-1)(3r+2)}{(2r^2-1)}
=\frac{1}{(2r^2-1)(2r-1)}\cdot \frac{1-\frac{12r^3-22r^2-2r+12}{\delta}}{(1+\frac{2r-3}{(2r-1)\delta})}.
\]
This shows both the fact that
$\frac{(6r-5)n}{(2r-1)\delta+2r-3}
\leq \frac{2(r-1)(3r+2)}{(2r^2-1)}\cdot \frac{n}{\delta} 
$ iff $\delta\leq 12r^3-22r^2-2r+12$, and the statement about the difference.
  \end{proof}

\section{Canonical Clump Graphs}
\label{sec:clump}
We use the letters $X,Y,Z$ to denote three unspecified but different colors from our $k$ colors.

\begin{theorem}\label{th:canonical} Assume $k\ge 3$. Let $G'$ be a $k$-colorable connected  graph of order $n$, diameter $D$ and minimum degree at least $\delta$. 
Then there is a $k$-colored  connected graph $G$ of the same parameters, with layers $L_0,\ldots,L_D$, for which  the following hold for every
$i$ $(0\le i\le D-1)$:
\begin{enumerate}[label={\upshape (\roman*)}]
\item\label{part:1k} If  $c(i)=1$, then $c(i+1)\le k-1$.
\item\label{part:manycolor} The number of colors used to color the set $L_i\cup L_{i+1}$ is $\min(k,c(i)+c(i+1))$. In particular, when
$c(i)+c(i+1)\le k$, then $L_i$ and $L_{i+1}$ do not share any color.
\item\label{part:k1} If $c(i)=k$, then $i\ge 2$ and  $c(i+1)\ge 2$.
\item\label{part:singleton} If $|L_i|>c(i)$, i.e., $L_i$ contains two vertices of the same color, then $i>0$ and $c(i)+\max\bigl(c(i-1),c(i+1)\bigl)\geq k$.
\end{enumerate}
\end{theorem}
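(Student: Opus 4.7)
My plan is to construct $G$ from $G'$ in stages, each preserving $n$, $\diam=D$, and the minimum-degree lower bound $\delta$, while progressively enforcing the four canonical conditions. Fix a vertex $x$ of $G'$ with eccentricity $D$, fix some proper $k$-coloring of $G'$, and let $L_0,\ldots,L_D$ be the BFS layers from $x$. First apply \emph{saturation} as defined in the paragraph before the theorem: add every missing edge between two differently-colored vertices that lie in the same layer or in consecutive layers. As argued there, this preserves $n$, keeps $\diam=D$ (edges added only within or between consecutive layers cannot shorten the $x$-eccentricity, so $\diam\ge D$, while adding edges never increases $\diam$), and only raises degrees.

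Next enforce (ii). Consider all pairs $(G,\chi)$ obtained from the saturated graph by alternating color swaps and re-saturations, still of order $n$, with eccentricity of $x$ equal to $D$, and minimum degree at least $\delta$. Among these, choose one that lexicographically maximizes $\bigl(|C_0\cup C_1|,|C_1\cup C_2|,\ldots,|C_{D-1}\cup C_D|\bigr)$, where $C_i$ is the palette on $L_i$. If (ii) fails at the first index $i$, then some color $c_0\in C_i\cap C_{i+1}$ coexists with a color $c'\notin C_i\cup C_{i+1}$; recoloring the $c_0$-clump of $L_{i+1}$ to $c'$ strictly increases $|C_i\cup C_{i+1}|$. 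If this would violate validity at the interface with $L_{i+2}$ (namely if $c'\in C_{i+2}$), propagate the same swap at layer $i+2$, producing a bounded chain of recolorings that absorbs at a layer whose palette is missing $c'$, all while strictly improving the lex vector, contradicting maximality.

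Once (ii) is in force, I enforce (iv). Suppose $|L_i|>c(i)$ with $c(i)+\max(c(i-1),c(i+1))<k$. Property (ii) applied to the pairs $(i-1,i)$ and $(i,i+1)$ yields $C_i$ disjoint from both $C_{i-1}$ and $C_{i+1}$. For a vertex $v$ in an over-large $c_0$-clump of $L_i$, saturation says its neighbors in $L_{i-1}\cup L_i\cup L_{i+1}$ use exactly the colors $C_{i-1}\cup(C_i\setminus\{c_0\})\cup C_{i+1}$; whenever this union is a proper subset of the full $k$-color set, a free color $c''$ exists, and recoloring $v$ alone to $c''$ strictly increases $c(i)$, contradicting a second-order lex-maximum of $\sum_ic(i)$. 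The edge case where all three palettes jointly exhaust the $k$ colors despite $c(i)+\max(c(i-1),c(i+1))<k$ is handled by a swap that transfers a color between $L_{i-1}$ and $L_{i+1}$ through $L_i$, again yielding a chain of recolorings that contradicts extremality. Finally, (i) and (iii) come as short corollaries: for (i), if $c(i)=1$ and $c(i+1)=k$, the clump of $L_{i+1}$ sharing color with $L_i$ has no neighbor in $L_i$, hence can be recolored out of $C_i$ to reduce $c(i+1)$ below $k$; and (iii) follows since $c(0)=1$ together with (i) and (ii) rules out $c(2)=k$ and enforces $c(i+1)\ge 2$ whenever $c(i)=k$, with $i\ge 2$ being automatic from $c(0)=1$, $c(1)\le k-1$.

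The main obstacle I expect is the interaction between the stages: a recoloring that improves (iv) at layer $i$ alters the palette $C_i$ and may disturb (ii) at a neighboring pair, while the swap chains in (ii) can temporarily inflate some clump sizes. The cure is to use a carefully ordered lexicographic criterion (enforce (ii) first, then (iv) with $\sum_ic(i)$ as a secondary objective, and so on), so that the argument proceeds by contradicting extremality rather than by naive iteration, and the bounded combinatorial nature of the moves guarantees termination. A secondary technical point is that each re-saturation only adds edges within or between consecutive layers, so the BFS layer structure from $x$ is preserved and the minimum degree is monotonically non-decreasing throughout.
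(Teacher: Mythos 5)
Your outline matches the paper closely for parts~\ref{part:1k}, \ref{part:manycolor}, and \ref{part:singleton}: saturation, a color-swap argument for \ref{part:manycolor} (the paper's version --- swap $X$ and $Y$ simultaneously in all $L_j$, $j\ge i+1$ --- is cleaner than your layer-by-layer ``chain''), and for \ref{part:singleton} the idea of freeing a color and recoloring one vertex of an over-large clump; you correctly flag the edge case where $L_{i-1}\cup L_i\cup L_{i+1}$ already uses all $k$ colors (the paper resolves it by choosing $X\notin C_{i-1}\cup C_i$, $Y\notin C_i\cup C_{i+1}$ and swapping $X,Y$ in $L_j$ for $j\ge i+1$). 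A side remark: part~\ref{part:1k} needs no repair at all --- if $c(i)=1$ with color $Y$, then no vertex of $L_{i+1}$ can be $Y$-colored since each has a $Y$-colored neighbor in $L_i$, so $c(i+1)\le k-1$ holds automatically.

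The serious gap is part~\ref{part:k1}, which you claim ``follows since $c(0)=1$ together with (i) and (ii) \ldots enforces $c(i+1)\ge 2$ whenever $c(i)=k$.'' That inference is false. Conditions \ref{part:1k} and \ref{part:manycolor} impose no constraint on $c(i+1)$ when $c(i)=k$: with $c(i)=k$ and $c(i+1)=1$ the equality in \ref{part:manycolor} reads $|C_i\cup C_{i+1}|=\min(k,k+1)=k$, which is vacuously satisfied, and \ref{part:1k} says nothing about the layer \emph{preceding} a single. The paper explicitly calls \ref{part:k1} ``the hard part'' and its proof is by far the longest: when $c(i)=k$, $c(i+1)=1$, it must physically move the $X$-colored clump of $L_i$ into $L_{i-1}$ when $L_{i-2}$ has a non-$X$ color, and otherwise (when $c(i-2)=1$ colored $X$, $c(i-1)=k-1$) it must re-partition the clump sizes of $L_{i-1}\cup L_i$ and recolor, splitting into the four boxed cases of Figures~\ref{fig:box1}--\ref{fig:box32} and verifying the degree inequalities $d'(\cdot)\ge d(\cdot)$ clump by clump. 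None of this is a corollary of the earlier parts, and a lexicographic-optimality argument in the style you used for \ref{part:manycolor} does not obviously substitute, since the moves here change layer membership and clump weights, not just colors. You would need to supply this case analysis (or an equivalent) for the proof to be complete.
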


\begin{proof}
After having proved a part of the Theorem, we will assume that $G'$ itself satisfies that property when we  complete  the  proof of the remaining parts. When we create 
new $G'$ graphs, they will still satisfy the already checked parts, in other words, we do not regress to issues that we already resolved.
 We fix a $k$-coloration of $G'$,  let $x_0$ be  a vertex of eccentricity $D$ in $G'$, and let  $L_0,\ldots,L_D$ be the distance layering of $G'$. Without loss of generality, we  assume that $G'$ is saturated.

\ref{part:1k} Select $G=G'$ with the same $k$-coloration, The statement
 follows from the fact that every vertex in $L_{i+1}$ has a neighbor in $L_i$; therefore if color $X$ appears in $L_{i+1}$, then $L_i$ has at least one color different from $X$.

 If \ref{part:manycolor} or \ref{part:singleton} is not satisfied in $G'$, our general strategy is the following: create a new $k$-coloring of the vertices of $G'$ such that
the set of the vertices in any layer does not change, vertices of different color will remain differently colored, and in the new coloring 
the already proven statements  still hold. We saturate $G'$ in the new coloring (by adding new edges, if needed) to obtain $G$. Now we complete this
strategy for \ref{part:manycolor}, and postpone the proof of \ref{part:singleton}  till the end.

If \ref{part:manycolor} fails in $G'$, consider the smallest $i$, such that the set $L_i\cup L_{i+1}$ contains fewer than $\min(k,c(i)+c(i+1))$ colors. By
\ref{part:1k}, $i>0$. Observe that  
there are different colors
$X,Y$ such that $X$ is used in both of $L_i$ and $L_{i+1}$, while $Y$ is not used in $L_i\cup L_{i+1}$.
We define a new coloration by switching colors $X$ and $Y$ in all $L_j$ for all $ j\ge i+1$. This is a good coloration, in which $L_i\cup L_{i+1}$ uses one more color.
Repeated application of this procedure yields a $k$-coloration where \ref{part:manycolor} holds.

The hard part of this theorem is \ref{part:k1}.
If $c(i)=k$, then by \ref{part:1k} $i\ge 2$. If $c(i+1)=1$ (i.e., if \ref{part:k1} fails), then we will move clumps  within
$L_{i-1}\cup L_{i}$, and  recolor of the graph,
such that the resulting layered colored graph will have the same required parameters as $G'$, creating no violations of  \ref{part:1k}, \ref{part:manycolor},                     
 and reducing the number of violations of \ref{part:k1} in $G'$.

Let $X$ be the color used on $L_{i+1}$. 

Assume first that $L_{i-2}$ contains a color different from $X$. Set $S$ be the set of vertices in
$L_i$ that is colored $X$. Move the vertices of $S$ from $L_i$ to $L_{i-1}$ without recoloring them, 
either merging them into the $X$-colored clump of $L_{i-1}$ or creating one, if no such clump existed in $L_{i-1}$.
Add new edges to achieve saturation.  
 In the resulting graph,
the layer indexed by $i$ contains $k-1$ colors,   reducing the number of violations of \ref{part:k1} in $G'$, 
and not creating any violation of  \ref{part:1k} or \ref{part:manycolor}.

Hence in the following we may assume that $c(i-2)=1$ and $L_{i-2}$ is colored with color $X$. By \ref{part:1k}, we have $c(i-1)\le k-1$.
If $c(i-2)<k-2$, then there is a color $Y$ not used in $L_{i-2}\cup L_{i-1}$. Recolor $G'$ by switching colors $X$ and $Y$ in $L_j$ $(0\leq j\le i-2)$ and 
recover saturation. 
In the new coloring $L_{i-2}$ has a color different from $X$, and we are back to the  case we already handled above.

Hence in the rest we may assume that $c(i-2)=c(i+1)=1$, $c(i)=k$, $c(i-1)=k-1$, and both $L_{i-2}$ and $L_{i+1}$ are colored with $X$.
Let $Y,Z$ be two arbitrary colors different from $X$, and $S$ be the set of vertices in $L_{i-1}\cup L_i$ colored with $X$, $Y$ or $Z$. We will repartition and recolor (only with colors $X,Y,Z$) the vertices in $S$, and possibly recolor $L_{i+1}$ from color $X$ to color $Y$.  If we recolor $L_{i+1}$, then we exchange the colors $X$ and $Y$ in all layers  $L_j$ for $ j\ge i+2$. After these steps, we recover saturation  in $G'$. After the changes, in $G'$ both $L_{i-1}$ and $L_i$ will contain fewer than $k$ colors, and in the resulting $k$-colored graph the diameter, the order and minimum degree condition do not change, and no instances violating \ref{part:1k} and \ref{part:manycolor}
will be created, and we reduced the number of violations of     \ref{part:k1}.  
The difficulty is in maintaining the minimum degree condition in $G'$ along these operations. This is what we check next, and the  
repartitioning and recoloring  of the vertices in $S$ will depend on some inequalities between certain clump sizes.

If $y$ is a vertex not in $L_{i-2}\cup L_{i-1}\cup L_i\cup L_{i+1}$ or
$y$ is not colored with $X$, $Y$ or $Z$ in the  graph before the operations, then the neighborhood set of $y$ does not change. 
If $y$ is a vertex in $L_{i-2}\cup L_{i-1}\cup L_i\cup L_{i+1}$
colored with one of $X,Y,Z$, then the symmetric difference between the new and old neighborhood set of $Y$ is a subset of $S$. Therefore we only need to check
the minimum degree condition for  
 vertices colored $X,Y$ or $Z$ in $L_{i-2}\cup L_{i-1}\cup L_i\cup L_{i+1}$, and we have to show that after the operations 
 they have at least as many   $X,Y,Z$  colored neighbors in $L_{i-2}\cup L_{i-1}\cup L_i\cup L_{i+1}$ as before the operations.
   For any $j$ $(1\le j\le 4)$, we will denote by $x_j,y_j$ and $z_j$
the number of vertices in $L_{i+j-3}$ colored $X,Y$ and $Z$ in $G'$ respectively, \emph{before the operations}. 
The $k\geq 3$ assumption, together with the fact that
$L_{i-1}$ has no color $X$  by    \ref{part:manycolor},  implies that $x_1$, $y_2$, $z_2$, $y_3$, $z_3$ and $x_4$ are positive.

We have several cases to consider:
\begin{enumerate}[label={\upshape{\fbox{\arabic*}}}]
    \item\label{casebox1} $x_3\geq y_3$ or $x_3\geq z_3$.\\ 
       It suffices to handle the case $x_3\ge y_3$, as the case $x_3\ge z_3$ can be handled similarly.
   Let the operations create
    in $L_{i-1}$  a  clump of size $y_2+y_3$ of color $Y$, and a clump of size $z_2$ of color $Z$; in $L_i$
    a clump of size $x_3$ of color $X$ and a clump of size $z_3$ of color $Z$. Recolor  $L_{i+1}$ with $Y$, and switch colors $X$ and $Y$ in every
    $L_j$ for $ j\ge i+2$, 
   see Fig.~\ref{fig:box1}. 
Note that, as claimed, $|L_{i-1}\cup L_i|$ did not change. We verify the minimum degree condition.  Let $d(W_i)$ to denote the number of neighbors of a vertex $w$ from
the  clump colored $W$ in layer $L_i$ among the $X,Y,Z$ colored vertices of $L_{i-2}\cup L_{i-1}\cup L_i\cup L_{i+1}$
before the operations,  and $d'(W_i)$ to denote the degree of a vertex  $w'$ from the clump colored $W$ in layer $L_i$ 
among the $X,Y,Z$ colored vertices of $L_{i-2}\cup L_{i-1}\cup L_i\cup L_{i+1}$
after the operations. We have:
\begin{eqnarray*}
d'(X_{i-2})&=&d(X_{i-2})+y_3,\\
d'(Y_{i-1})&=&x_1+z_2+x_3+z_3                         =d(B_i),\\
d'(Z_{i-1})&=&x_1+y_2+z_3+y_3=d'(Z_{i-1}),\\
d'(X_{i})&=&y_2+y_3+z_2+z_3+x_4>y_2+y_3+z_2+z_3=d(X_{i}),\\
d'(Z_{i})&=&y_2+y_3+x_3+x_4  =    d(Z_{i})         ,\\    
d'(Y_{i+1})&=& x_3+z_3=d'(X_{i+1})+(x_3-y_3).
\end{eqnarray*}

\begin{figure}[ht]
    \centering
    \begin{tikzpicture}[-,auto,node distance=2.25cm, thick, scale=0.7]
    \tikzstyle{every state}=[fill=none,draw=black,text=black,scale=0.6]
    \node[state] (0)  [draw=white] at (-1.5,0){ };
    \node[state] (1)  [label={\small $x_1$}] at (0,0) {$X$};
    \node[state] (21) [label={\small $y_2$}] at (2,2)  {$Y$};
    \node[state] (22) [label=below:{\small $z_2$}] at (2,-2) {$Z$};
    \node[state] (32) [label={\small $y_3$}] at ( 4,2) {$Y$};
    \node[state] (31) [label={[xshift=2.5mm]\small $x_3$}] at (4,0) {$X$};
    \node[state] (33)  [label=below:{\small $z_3$}]  at (4,-2) {$Z$};
    \node[state] (4)  [label={\small $x_4$}] at (6,0) {$X$};
    \node[state] (5)  [draw=white] at (7.5,0) { };
   
    \node[state] (x1) [label={\small $x_1$}] at (9,0) {$X$};
    \node[state] (x21) [label={[label distance=1mm]\small $y_2+y_3$}] at (11,2) {$Y$};
    \node[state] (x22) [label=below:{\small $z_2$}] at (11,-2) {$Z$};
    \node[state] (x32)  [label={[xshift=0mm, label distance=-12mm]\small $z_3$}] at (13,-2) {$Z$};
        \node[state] (x31) [label={\small $x_3$}] at (13,0) {$X$};
    \node[state] (x4)  [label={\small $y_4$}] at (15,2) {$Y$};
    \node[state] (x5)  [draw=white] at (16.5,2) { };
   
    \path (0)  edge [dashed] node { }(1)
          (1)  edge node { }(21) edge node { }(22)
            (21) edge node { }(22) edge node { }(31) edge node { }(33)           
          (22) edge node { }(31) 
          (22) edge node { }(32) %
          (31) edge node { }(32) 
          (32) edge [bend right=25] node { }(33)
         (31) edge node { }(33)
          (4)  edge node { }(32) edge node { }(33)  edge [dashed] node { }(5)
                    
          (x1)  edge node { }(x21) edge node { }(x22)
         (x32) edge node {} (x21) 
         (x21) edge node { }(x22) 
         (x21) edge node { }(x31)
         (x4)  edge node { }(x32)
         (x4)  edge node { }(x31) edge [dashed] node { } (x5) 
          (x22) edge node { }(x31)
         (x31) edge node { }(x32)
          (x1)  edge [dashed] node { } (5)    ;
    \end{tikzpicture}
    \caption{When $x_3 \geq y_3$, before  and after the operations (left and right). }
    \label{fig:box1}
\end{figure}
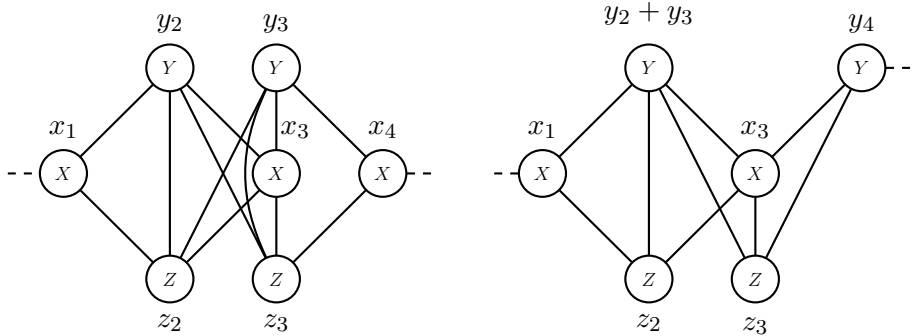

    \item $x_3<y_3$ and $x_3<z_3$ and  ($x_3\geq y_2$ or $x_3\geq z_2$).\\ 
        We may  assume $x_3\geq y_2$, as $x_3\geq z_2$ can be handled similarly.  Let the operations create
        in $L_{i-1}$  a clump of size $x_3$ of color $Y$ and a clump of size $z_2$ of color $Z$; 
        and in $L_{i}$ create a clump of size $y_3+y_2$ of color $X$ and a clump $z_3$ of color $Z$; recolor
        $L_{i+1}$ to color $Y$ and switch colors $X$ and $Y$ in $L_j$ for $j\ge i+2$, see Figure \ref{fig:box2}.
\vspace{-2mm}        
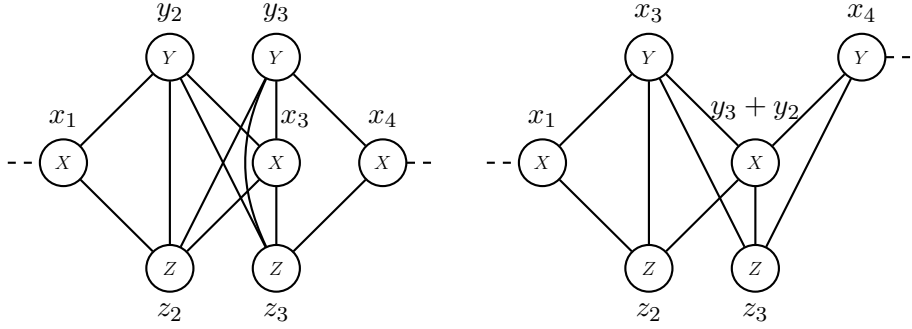
\begin{figure}[ht]
    \centering
    \begin{tikzpicture}[-,auto,node distance=2.25cm, thick, scale=0.7]
    \tikzstyle{every state}=[fill=none,draw=black,text=black,scale=0.6]
    \node[state] (0)  [draw=white] at (-1.5,0){ };
    \node[state] (1)  [label={\small $x_1$}] at (0,0) {$X$};
    \node[state] (21) [label={\small $y_2$}] at (2,2)  {$Y$};
    \node[state] (22) [label=below:{\small $z_2$}] at (2,-2) {$Z$};
    \node[state] (32) [label={\small $y_3$}] at ( 4,2) {$Y$};
    \node[state] (31) [label={[xshift=2.5mm]\small $x_3$}] at (4,0) {$X$};
    \node[state] (33)  [label=below:{\small $z_3$}]  at (4,-2) {$Z$};
    \node[state] (4)  [label={\small $x_4$}] at (6,0) {$X$};
    \node[state] (5)  [draw=white] at (7.5,0) { };
   
    \node[state] (x1) [label={\small $x_1$}] at (9,0) {$X$};
    \node[state] (x21) [label={\small $x_3$}] at (11,2) {$Y$};
    \node[state] (x22) [label=below:{\small $z_2$}] at (11,-2) {$Z$};
    \node[state] (x32) [label={[label distance=1mm]\small $y_3+y_2$}] at (13,0) {$X$};
    \node[state] (x33) [label=below:{\small $z_3$}] at (13, -2) {$Z$};
    \node[state] (x4)  [label={\small $x_4$}] at (15,2) {$Y$};
    \node[state] (x5)  [draw=white] at (16.5,2) { };
   
    \path (0)  edge [dashed] node { }(1)
          (1)  edge node { }(21) edge node { }(22)
            (21) edge node { }(22) edge node { }(31) edge node { }(33)           
          (22) edge node { }(31) 
          (22) edge node { }(32) %
          (31) edge node { }(32) 
          (32) edge [bend right=25] node { }(33)
         (31) edge node { }(33)
          (4)  edge node { }(32) edge node { }(33)  edge [dashed] node { }(5)
               
          (5)   edge [dashed] node { }(x1)        
          (x1)  edge node { }(x21) edge node { }(x22)
          (x21) edge node { }(x22) edge node { }(x32) edge node { }(x33)
         (x22) edge node { }(x32) 
         (x32) edge node { }(x33)
          (x4)  edge node { }(x32) edge node { }(x33)
                edge [dashed] node { }(x5)    ;
    \end{tikzpicture}
    \caption{The case when $x_3<\min(y_3,z_3)$, and $x_3 \geq y_2$, before  and after the operations (left and right).}
    \label{fig:box2}
\end{figure} 
        
        Note that $|L_{i-1}\cup L_i|$ did not change. When we  verify the minimum degree condition, we use the notation of Case~\ref{casebox1}.  We have:
\begin{eqnarray*}
    d'(X_{i-2}) & = & d(X_{i-2}), \\
    d'(Y_{i-1}) &=& x_1+y_2+y_3+z_2+z_3 > x_1+z_2+x_3+z_3 = d(Y_i), \\
    d'(Z_{i-1}) &=& x_1+x_3+y_2+y_3 = d(Z_{i-1}) , \\
    d'(X_i) &=& x_3+x_4+z_2+z_3 = d(Y_i) , \\
    d'(Z_i) &=& x_3+x_4+y_2+y_3 = d(Z_i) , \\
    d'(Y_{i+1}) &=& d(X_{i+1}) +y_2  .
\end{eqnarray*}
At this point, we are left with checking the satement for $x_3<\min (y_3,z_3,y_2, z_2)$. We split this into two cases.
    \item $x_3<\min (y_3,z_3,y_2, z_2)$ and $z_2\geq y_3$.\\ 
The operations are shown in Figure \ref{fig:box31}.
When we check the degrees, we use the notation introduced in Case~\ref{casebox1}:
\begin{eqnarray*}
d'(X_{i-2})&=& d(X_{i-2})+x_3 ,\\
d'(Y_{i-1})&=&x_1+z_2 +y_3+z_3> x_1+z_2+x_3 +z_3      =d(Y_{i-1}),\\
d'(Z_{i-1})&=&x_1+y_2 +x_3+ y_3+z_3>   x_1+y_2+y_3+x_3   =d(Z_{i-1}),\\
d'(X_i)&=&z_2+y_2+x_3 +x_4\geq y_2+y_3+x_3+x_4=d(Z_i) ,\\
d'(Y_{i+1})&=&d(X_{i+1}) .
\end{eqnarray*}
  
\begin{figure}[ht]
    \centering
    \begin{tikzpicture}[-,auto,node distance=2.25cm, thick, scale=0.7]
    \tikzstyle{every state}=[fill=none,draw=black,text=black,scale=0.6]
    \node[state] (0)  [draw=white] at (-1.5,0){ };
    \node[state] (1)  [label={\small $x_1$}] at (0,0) {$X$};
    \node[state] (21) [label={\small $y_2$}] at (2,2)  {$Y$};
    \node[state] (22) [label=below:{\small $z_2$}] at (2,-2) {$Z$};
    \node[state] (32) [label={\small $y_3$}] at ( 4,2) {$Y$};
    \node[state] (31) [label={[xshift=2.5mm]\small $x_3$}] at (4,0) {$X$};
    \node[state] (33)  [label=below:{\small $z_3$}]  at (4,-2) {$Z$};
    \node[state] (4)  [label={\small $x_4$}] at (6,0) {$X$};
    \node[state] (5)  [draw=white] at (7.5,0) { };
   
    \node[state] (x1)  [label={\small $x_1$}] at (9,0) {$X$};
    \node[state] (x21) [label={[label distance=1mm]\small $y_2+x_3$}] at (11,2) {$Y$};
    \node[state] (x22) [label=below:{\small $z_2$}] at (11,-2) {$Z$};
    \node[state] (x31)  [label={[label distance=1mm]\small $y_3+z_3$}] at (13,0) {$X$};
    \node[state] (x4)  [label={\small $x_4$}]  at (15,2) {$Y$};
    \node[state] (x5)  [draw=white] at (16.5,2) { };
   
    \path (0)  edge [dashed] node { }(1)
          (1)  edge node { }(21) edge node { }(22)
            (21) edge node { }(22) edge node { }(31) edge node { }(33)           
          (22) edge node { }(31) 
          (22) edge node { }(32) %
          (31) edge node { }(32) 
          (32) edge [bend right=25] node { }(33)
         (31) edge node { }(33)
          (4)  edge node { }(32) edge node { }(33)  edge [dashed] node { }(5)
               
          (5)   edge [dashed] node { }(x1)        
          (x1)  edge node { }(x21) edge node { }(x22)
          (x21) edge node { }(x22) edge node { }(x31)
          (x22) edge node { }(x31) 
          (x4)  edge node { }(x31) edge [dashed] node { }(x5) ;
    \end{tikzpicture}
    \caption{The case  $x_3\le\max(y_2,y_3,z_2,z_3)$ and  $z_2\geq y_3$, before and after the operations.}
    \label{fig:box31}
\end{figure}
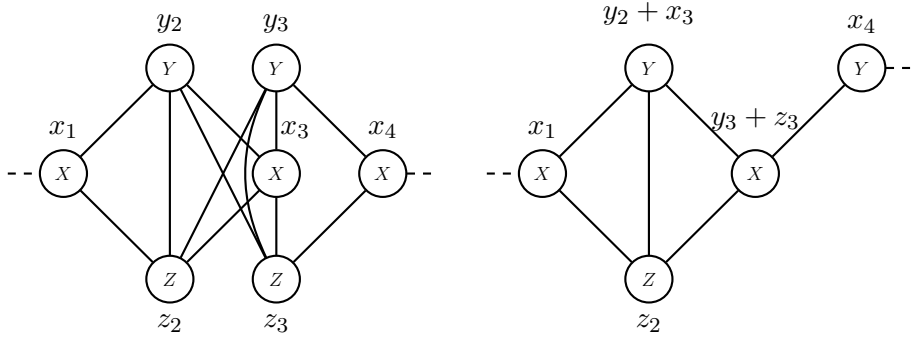

        \item $x_3<\min (y_3,z_3,y_2, z_2)$ and $z_2< y_3$.\\
        First note that the clump colored $X$ in $L_i$ can simply be moved into $L_{i-1}$ keeping the minimum degree condition. After this move on 
        the left side of Figure \ref{fig:box32}, we see the mirror image of the left side of Figure \ref{fig:box31}, just the numbers are different.
        The operations for this case, which are  the ``mirror image", of the operations in the previous case, is shown in Figure \ref{fig:box32}. 
        Because of the symmetry, we do not delve into the details. 
           
   \begin{figure}[ht]
    \centering
    \begin{tikzpicture}[-,auto,node distance=2.25cm, thick, scale=0.7]
    \tikzstyle{every state}=[fill=none,draw=black,text=black,scale=0.6]
    \node[state] (0)  [draw=white] at (-1.5,0){ };
    \node[state] (1)  [label={\small $x_1$}] at (0,0) {$X$};
    \node[state] (21) [label={\small $y_2$}] at (2,2)  {$Y$};
    \node[state] (22) [label=below:{\small $z_2$}] at (2,-2) {$Z$};
    \node[state] (32) [label={\small $y_3$}] at ( 4,2) {$Y$};
    \node[state] (31) [label={[xshift=2.5mm]\small $x_3$}] at (4,0) {$X$};
    \node[state] (33)  [label=below:{\small $z_3$}]  at (4,-2) {$Z$};
    \node[state] (4)  [label={\small $x_4$}] at (6,0) {$X$};
    \node[state] (5)  [draw=white] at (7.5,0) { };
   
    \node[state] (x1)   [label=below:{\small $x_1$}] at (9,0) {$X$};
    \node[state] (x21) [label={\small $y_2+z_2$}] at (11,2) {$Y$};
    \node[state] (x32)  [label={[label distance=2mm]\small $y_3+x_3$}] at (13,0) {$X$};
    \node[state] (x33)  [label=below:{\small $z_3$}] at (13,-2) {$Z$};
    \node[state] (x4)   [label={\small $x_4$}] at (15,2) {$Y$};
    \node[state] (x5)  [draw=white] at (16.5,2) { };
   
    \path (0)  edge [dashed] node { }(1)
          (1)  edge node { }(21) edge node { }(22)
            (21) edge node { }(22) edge node { }(31) edge node { }(33)           
          (22) edge node { }(31) 
          (22) edge node { }(32) %
          (31) edge node { }(32) 
          (32) edge [bend right=25] node { }(33)
         (31) edge node { }(33)
          (4)  edge node { }(32) edge node { }(33)  edge [dashed] node { }(5)
               
          (5)   edge [dashed] node { }(x1)        
          (x1)  edge node { }(x21) 
          (x21) edge node { }(x32) edge node { }(x33)
          (x32) edge node { }(x33) 
          (x4)  edge node { }(x32) edge node { }(x33)
                edge [dashed] node { }(x5) ;
    \end{tikzpicture}
    \caption{The case $x_3\le\max(y_2,y_3,z_2,z_3)$ and  $z_2< y_3$, before and after the operations.}
    \label{fig:box32}
\end{figure}
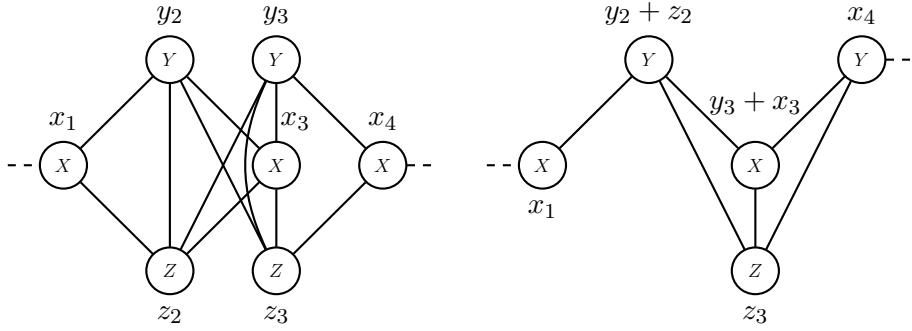        
\end{enumerate}
This concludes the proof of \ref{part:k1}

Finally, to prove part \ref{part:singleton},
take  the least  $i$, such that  $|L_i|>c(i)$, but $c(i)+\max\bigl(c(i-1),c(i+1)\bigl)< k$. 
As $|L_0|=c(0)=1$, this means $i>0$. 
First we show that
we may assume that $L_{i-1}\cup L_{i}\cup L_{i+1}$ misses some color $X$. Indeed, if all colors appear in
$L_{i-1}\cup L_{i}\cup L_{i+1}$, let $X$ be a color not used in $L_{i-1}\cup L_{i}$ and $Y$ be a color not used in $L_{i}\cup L_{i+1}$.
Create a new coloring of $G$ by switching the colors $X,Y$ in $L_j$ for all $ j\ge i+1$. After the switch, $X$ is missing from $L_{i-1}\cup L_{i}\cup L_{i+1}$.

Since  $|L_i|>c(i)$, there are two vertices $x,y$ in $L_i$ that are colored the same. Recolor $x$ with color $X$. This is a valid coloring, in which
$L_i$ contains one more color then before. Repeating this procedure produces a coloring, in which $|L_i|=c_i$ or $c(i)+\max\bigl(c(i-1),c(i+1)\bigl)= k$. Repeating this procedure 
recursively for the next  least $i$, we can eliminate one after the other the $i$'s that fail  \ref{part:singleton}, not creating any instances where the first three statements
would fail.
\end{proof}

 \begin{definition}\label{def:canon} We call a $k$-colored weighted clump graph 
$H$ \emph{canonical}, if  there is a graph $G$, whose clump graph is $H$, and  $H$ satisfies the four statements in Theorem~\ref{th:canonical}, i.e.,
$H$ has $D+1$ layers $L_0,L_1,\ldots,L_D$, where $D=\diam(H)$, and for each $1\le i<D$ we have 
\begin{enumerate}[label={\upshape (\roman*)}]
\item\label{part:H1k} If $|L_i|=1$, then $L_{i+1}\le k-1$.
\item\label{part:Hmanycolor} The number of colors used to color the set $L_i\cup L_{i+1}$ is $\min\bigl(k,c(i)+c(i+1)\bigl)$. In particular, when
$c(i)+c(i+1)\le k$, then $L_i$ and $L_{i+1}$ do not share any color.
\item\label{part:Hk1} If $|L_i|=k$,  then $i\ge 2$ and $|L_{i+1}|\ge 2$.
\item\label{part:Hsingleton} If $L_i$ has a weight that is bigger than $1$, then $i>0$ and $|L_i|+\max\bigl(|L_{i-1}|,|L_{i+1}|\bigl)\geq k$.
\end{enumerate}
\end{definition}
Note that \ref{part:Hmanycolor} implies that the edges missing between $L_i$ and $L_{i+1}$ form a matching of size $\max\bigl(0,|L_i|+|L_{i+1}|-k\bigl)$. In particular, when $|L_i\cup L_{i+1}|\leq k$ then all edges between $L_i$ and $L_{i+1}$ are present.

\begin{corollary} \label{3canon}
In the canonical clump graph of a 3-colored connected graph, the following color sets are possible in two consecutive layers:
\begin{equation} \label{possiblepatterns}
X\Bigl| Y, \, \, X\Bigl| YZ,  \, \, YZ\Bigl| X,  \, \, XY\Bigl| XZ,  \, \, XY\Bigl| XYZ,  \, \, XYZ\Bigl| XY,  \, \, XYZ\Bigl| XYZ.
\end{equation}
\end{corollary}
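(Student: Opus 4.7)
The plan is a short case analysis on the pair $\bigl(c(i),c(i+1)\bigr)\in\{1,2,3\}^2$, invoking the canonical properties of Definition~\ref{def:canon} with $k=3$.

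First I would list all nine a priori possibilities for $\bigl(c(i),c(i+1)\bigr)$. Canonical property~\ref{part:H1k} says that if $c(i)=1$ then $c(i+1)\le k-1=2$, which eliminates $(1,3)$. Symmetrically (applied to the preceding layer), canonical property~\ref{part:Hk1} says that if $c(i+1)=k=3$ then $c(i)\ge 2$, which again eliminates $(1,3)$; and the direct form of \ref{part:Hk1} applied at index $i$ says that if $c(i)=3$ then $c(i+1)\ge 2$, eliminating $(3,1)$. That leaves exactly seven pairs: $(1,1), (1,2), (2,1), (2,2), (2,3), (3,2), (3,3)$.

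Next I would apply canonical property~\ref{part:Hmanycolor} to each surviving pair to pin down the color pattern, up to relabeling of the three colors $X,Y,Z$. The number of colors used on $L_i\cup L_{i+1}$ is $\min\bigl(3,c(i)+c(i+1)\bigr)$; equivalently, $\abs*{\text{colors}(L_i)\cap\text{colors}(L_{i+1})}=\max\bigl(0,c(i)+c(i+1)-3\bigr)$. For $(1,1)$ and $(1,2)$ and $(2,1)$, this intersection is $0$, yielding the disjoint patterns $X\mid Y$, $X\mid YZ$, $YZ\mid X$. For $(2,2)$ the intersection has size $1$, giving $XY\mid XZ$. For $(2,3)$ and $(3,2)$ the intersection has size $2$, giving $XY\mid XYZ$ and $XYZ\mid XY$. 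For $(3,3)$ the intersection has size $3$, giving $XYZ\mid XYZ$. These are precisely the seven patterns listed in~(\ref{possiblepatterns}).

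I do not expect a real obstacle here; the entire content is bookkeeping on top of Definition~\ref{def:canon}. The only mild subtlety is that property~\ref{part:Hk1} of the definition is stated in the ``forward'' direction ($|L_i|=k$ forces $|L_{i+1}|\ge 2$), so to rule out the pair $(1,3)$ I either re-apply \ref{part:H1k} as I did, or note that the statement we want is inherited from part~\ref{part:1k} of Theorem~\ref{th:canonical} at the corresponding index. No further verification is needed, since property~\ref{part:Hsingleton} concerns clump weights rather than color sets and therefore does not further restrict the list of color patterns.
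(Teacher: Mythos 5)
Your case analysis is correct and is precisely the bookkeeping the paper leaves implicit: the paper states Corollary~\ref{3canon} without proof, as a direct consequence of Definition~\ref{def:canon}, and your enumeration of the seven pairs $(c(i),c(i+1))$ surviving properties~\ref{part:H1k} and~\ref{part:Hk1}, followed by the pinning-down of each color pattern via property~\ref{part:Hmanycolor}, is exactly the intended argument. Your parenthetical caveat about the forward-only phrasing of~\ref{part:Hk1} is well taken but harmless, since, as you note, $(1,3)$ is already excluded by~\ref{part:H1k} applied at index $i$.
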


\section{Duality}
\label{sec:dual}


In this Section, $k$ is fixed.
Look differently at our diameter problem: assume that the diameter $D$,  and the lower bound $\delta$ for the degrees of the graph
are  fixed (in addition to $k$), how small $n$ can be, such that connected $k$-colorable graphs of order
$n$, minimum degree at least $\delta$, and diameter $D$ exist? Let $\mathcal{H}$   denote the family of canonical clump graphs of 
diameter $D$ that arises from connected $k$-colorable graphs  with diameter $D$ and minimum degree at least $\delta$, of unspecified order. Fix an $H\in \mathcal{H}$, 
  and consider the following packing problem for $H$:
 assign non-negative real dual weights $u(y)\geq 0$ to $y\in V(H)$, and  
$$ \text{Maximize } \delta \cdot \sum_{y\in V(H)} u(y),$$
\noindent subject to condition
\begin{equation}
    \label{eq:dualcond}
 \forall x\in V(H) \qquad  \sum\limits_{y\in V(H):xy\in E(H)} u(y) \leq 1. 
\end{equation}

 \begin{theorem} \label{th:sulyozas}
 Assume that  there exist constants $\tilde u>0, C\geq 0$, such that for all $D$ and $\delta$,  and all $H\in \mathcal{H}$, in the linear program \emph{(\ref{eq:dualcond})}  the optimum is at least 
 \begin{equation}\label{feltettuk}
 \tilde u \delta (D+1) -C\delta.  \end{equation}
Then, for any $H\in \mathcal{H}$,
  we have 
 \begin{equation} \label{tildes}
 D\leq \frac{1}{\tilde u}\cdot \frac{n}{\delta} + C.
 \end{equation} 
 \end{theorem}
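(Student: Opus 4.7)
The plan is to run the standard weak-duality / double-counting argument that links the vertex-packing LP on the clump graph $H$ to the weight sum (i.e., the order $n$) of the blown-up graph $G$.

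First I would set up the bookkeeping. For an $H\in\mathcal{H}$, write $w(y)$ for the clump weight at $y\in V(H)$, so $\sum_{y\in V(H)} w(y)=n$. The minimum-degree hypothesis on $G$ translates, at the level of $H$, to the inequality
\begin{equation*}
\sum_{y:\, xy\in E(H)} w(y)\;\ge\;\delta \qquad \text{for every } x\in V(H).
\end{equation*}
Now let $u$ be any feasible solution of the packing LP \eqref{eq:dualcond}.

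The core step is to double count
\begin{equation*}
S\;=\;\sum_{x\in V(H)}\, w(x)\sum_{y:\, xy\in E(H)} u(y).
\end{equation*}
On the one hand, the constraint \eqref{eq:dualcond} gives $\sum_{y:\,xy\in E(H)} u(y)\le 1$ for every $x$, hence $S\le\sum_x w(x)=n$. On the other hand, swapping the order of summation and using the degree inequality yields
\begin{equation*}
S\;=\;\sum_{y\in V(H)} u(y)\sum_{x:\, xy\in E(H)} w(x)\;\ge\;\delta\sum_{y\in V(H)} u(y).
\end{equation*}
Combining the two bounds gives $\delta\sum_y u(y)\le n$ for \emph{every} feasible $u$; in particular this applies to an optimal $u$.

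Finally I would invoke the hypothesis \eqref{feltettuk}: picking $u$ to realize the LP optimum, we have $\delta\sum_y u(y)\ge \tilde u\,\delta(D+1)-C\delta$. Chaining this with the inequality just established gives $\tilde u\,\delta(D+1)-C\delta\le n$, so dividing by $\tilde u\,\delta$ and rearranging produces $D\le \tfrac{1}{\tilde u}\cdot\tfrac{n}{\delta}+\tfrac{C}{\tilde u}-1$, which is the bound \eqref{tildes} (after absorbing the $O(1)$ constant, since the statement only asserts an additive constant). The argument is essentially free of obstacles once the double count is set up: the only thing to be careful about is not conflating the clump weights $w$ (intrinsic to $H$ and encoding $n$ and the minimum-degree condition) with the dual packing variables $u$ (which we optimize).
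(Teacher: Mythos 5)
Your proof is correct and follows essentially the same weak LP-duality double-counting argument as the paper; you even (rightly) flag that the literal constant comes out as $\tfrac{C}{\tilde u}-1$ rather than $C$, a small imprecision the paper also glosses over but which is harmless since only an $O(1)$ additive term is used downstream.
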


\begin{proof} Fix $H\in \mathcal{H}$. Then $H$ is the clump graph of saturated graph $G$. $G$ can be reconstructed by 
assigning $w(x)\geq 1$ integer weights
for all vertices of $H$, such that we assign 1 to the vertex in $L_0$. Now $n=|V(G)|=\sum\limits_{x\in V(H)} w(x)$. Consider the optimization problem 
$$ \text{Minimize }\sum\limits_{x\in V(H)} w(x),$$ 
\noindent subject to condition 
\begin{equation}  \label{eq:primalcond}
\forall y\in V(H)\qquad \sum\limits_{x\in V(H):xy\in E(H)} w(x) \geq \delta.
\end{equation}
We face the trivial inequality of the duality of linear programming \cite{dantzig}: namely, for any $u $ and $w$ feasible solutions, by (\ref{eq:primalcond}) and 
(\ref{eq:dualcond}), we have:
\begin{eqnarray*}
\delta \sum_{y\in V(H)} u(y) & \leq &  \sum_{y\in V(H)} u(y)\sum_{x\in V(H): xy\in E(H)} w(x) \\
& = & \sum_{x\in V(H)} w(x) \sum_{y\in V(H): xy\in E(H)} u(y) \leq  \sum_{x\in V(H)} w(x).
\end{eqnarray*}
As the objective function reaches $\delta\sum_{y\in V(H)} u(y) \geq \tilde u \delta (D+1) -C\delta$, the theorem follows.
\end{proof}
\vskip 12pt

\noindent \textit{Proof of Theorem~\ref{th:upperbound}}.  
Assume $k\geq 3$. Consider a $k$-colorable canonical clump graph $H$ with
layers $L_0,\ldots,L_D$. . 
We are going to  find a good packing $u$ on the vertices of $H$ as required to use        
Theorem~\ref{th:sulyozas}. The dual weighting $u$ will take at most $2k-2$ different values, and every layer will get the same total dual weight.

Let $i$ be an integer, $0\leq i\leq D$. Set $L_{-1}=L_{D+1}=\emptyset$.

If $|L_i|\leq k-1$, then assign the dual weight $\frac{k-1}{(3k-4)|L_i|}$ to every $v\in L_i$. This makes the total dual weight of $L_i$ exactly $\frac{k-1}{3k-4}$,
and the dual weight of every $v\in L_i$ at least $\frac{1}{3k-4}$. 
 
 If $|L_i|=k$, 
 let $X_i$ be the (possibly empty) set of vertices in $L_i$  connected to every vertex in $L_{i-1}\cup L_{i+1}$, and set  $Y_i=L_i\setminus X_i$.
As $|X_i|\le k-|L_{i-1}|$ and $H$ is canonical, by Definition~\ref{def:canon}\ref{part:H1k} we have $0\le |X_i| \le k-2$, and $Y_i\ne\emptyset$.

Set the dual weight of every $v\in X_i$ to $\frac{1}{3k-4}$, and  the dual weight
of every $v\in Y_i$ to $\frac{1}{3k-4}-\frac{1}{(3k-4)(k-|X_i|)}$. As $|Y_i|=k-|X_i|$, the total dual weight of $L_i$ is
$$\frac{|X_i|}{3k-4}+(k-|X_i|)\left(\frac{1}{3k-4}-\frac{1}{(3k-4)(k-|X_i|)}\right)=\frac{k-1}{3k-4}.$$
Moreover, as $k-|X_i|\ge 2$, the dual weight of $v\in Y_i$ is at least $\frac{1}{2(3k-4)}$.
 
Now take vertex $x$ of $H$. Then $x\in L_j$ for some $0\le j\le D$. We are going to check that the neighbors of $x$ have a total dual weight of at most 1. 

If  $|L_j|\leq k-1$, or ($|L_j|=k$ and $x\in X_j$), then the weight of $x$ is at least $\frac{1}{3k-4}$. Since the open neighborhood  of
$x$ is a subset of $(L_{j-1}\cup L_j\cup L_{j+1})\setminus\{x\}$,
the sum of the weight of its neighbors is at most
$\frac{3(k-1)}{3k-4}-\frac{1}{3k-4}=1$.

If $|L_j|=k$ and $x\in Y_j$, then there is a $y\in L_{j-1}\cup L_{j+1}$ such that the open neighborhood  of $x$ is contained by
 $(L_{j-1} \cup L_j\cup L_{j+1})\setminus\{x,y\}$. As the sum of the weights of $x$ and $y$ is at least $\frac{1}{3k-4}$,
 the total weight of the neighbors of $x$ at at most $\frac{3(k-1)}{3k-4}-\frac{1}{3k-4}=1$.      

The total dual weight of the vertices in $H$ is $\frac{k-1}{3k-4}(D+1)$. 
 Now  Theorem~\ref{th:upperbound}
follows from Theorem~\ref{th:sulyozas}.
\hfill$\qed$

\vskip 12pt

Using $k=3$, we get  a weaker version of Theorem \ref{th:CDS}:

\begin{corollary}
\label{th:5o2}
If $G$ is connected $3$-colorable graph of order $n$ and minimum degree
$\delta\ge 1$,
$$ \diam(G) \leq \frac{5n}{2\delta}+O(1). $$
\end{corollary}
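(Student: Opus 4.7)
The plan is to invoke Theorem~\ref{th:upperbound} directly with $k=3$. Any connected $3$-colorable graph $G$ of order $n$ and minimum degree at least $\delta$ is in particular a connected $k$-colorable graph with $k=3$ satisfying the same minimum degree hypothesis, so Theorem~\ref{th:upperbound} applies and yields $\diam(G)\le \frac{3k-4}{k-1}\cdot\frac{n}{\delta}+O(1)$.

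The arithmetic step is a single substitution: at $k=3$, the coefficient $\frac{3k-4}{k-1}$ equals $\frac{9-4}{3-1}=\frac{5}{2}$, so the inequality becomes $\diam(G)\le \frac{5n}{2\delta}+O(1)$, which is the statement of the corollary. The $O(1)$ additive term from Theorem~\ref{th:upperbound} transfers directly to the $O(1)$ here, and the hypothesis $\delta\ge 1$ is at least as weak as anything required. The resulting bound is strictly weaker than what Theorem~\ref{th:CDS} implies for $3$-colorable graphs (since $3$-colorable implies $4$-colorable, and Theorem~\ref{th:CDS} supplies the sharper $-1$ additive term in place of $O(1)$); the corollary is of interest because its proof is self-contained within the duality framework of Section~\ref{sec:dual}.

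There is no genuine obstacle at this stage, as all the substantive work lives inside the proof of Theorem~\ref{th:upperbound}. That proof builds a dual weighting on the canonical clump graph $H$ by assigning weight $\frac{k-1}{(3k-4)|L_i|}$ to each vertex of a layer $L_i$ with $|L_i|\le k-1$, and by splitting a layer of size $k$ into a set $X_i$ of vertices fully adjacent to both neighboring layers and its complement $Y_i$, with weights chosen so every layer totals $\frac{k-1}{3k-4}$. Specializing this construction to $k=3$ simplifies considerably: $|X_i|\in\{0,1\}$, and $Y_i$ has at least two vertices, giving per-layer weight $\frac{2}{5}$ and hence the $\frac{5}{2}$ coefficient through Theorem~\ref{th:sulyozas}. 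Once Theorem~\ref{th:upperbound} is granted, Corollary~\ref{th:5o2} is just the $k=3$ arithmetic check.
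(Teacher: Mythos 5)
Your proof is correct and matches the paper exactly: the paper derives Corollary~\ref{th:5o2} by substituting $k=3$ into Theorem~\ref{th:upperbound}, which gives $\frac{3k-4}{k-1}=\frac{5}{2}$. Your extra commentary on how the dual weighting specializes at $k=3$ is accurate but not needed once Theorem~\ref{th:upperbound} is available.
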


\section{Inclusion-Exclusion (Sieve)}
\label{sec:inex}

Let us be given a 3-colorable saturated connected graph  $G$  of order $n$ and minimum degree at least  $\delta$, which maximizes the 
diameter $D$ among such graphs. By Theorem~\ref{th:canonical}, we may assume without loss of generality that the clump graph of $G$ is canonical.
Furthermore, Corollary~\ref{3canon} tells what kind of color sets can be in consecutive layers.
We often use these facts without explicit reference in the future.
Let $\ell_i=|L_i|$ denote the cardinality of the $i^{\text{th}}$ layer of $G$. 
As we are about to prove Theorem~\ref{posmain}, we can assume without loss of generality 
that $\ell_i\leq 3\delta$. Indeed, if $G$ does not satisfy this inequality, eliminate vertices from clumps with excess above $\delta$, to obtain the graph $G'$ on $n'$ vertices.
$G'$ still satisfy the conditions of Theorem~\ref{posmain}, and therefore its conclusion with $n'$ replacing $n$. Hence $G$ also satisfies the conclusion of
Theorem~\ref{posmain}.
We are going to build lower bounds for the sum of a couple of consecutive $\ell_i$'s, from
which we derive lower bounds for $n$. The key tool is the inclusion-exclusion formula for the size of the union of the open neighborhoods of some 
vertices.  Note that a vertex in $L_i$ can have neighbors only in $L_{i-1},L_i,L_{i+1}$. We denote the open neighborhood of vertex $z$ by $N(z)$.
 In Subsection~\ref{onelayer} we do this approach when the vertices are taken from different clumps from the same $L_i$,
 in Subsection~\ref{twolayers} we do this for vertices  taken from two consective layers.
 Recall that $c(i)$ denotes the number of clumps in $L_i$. Let $\mathcal{S}=\{i: c(i)=1\}$ be the set  of \emph{singles}.
We use the notation $x_i,y_i,z_i$ to represent  vertices in the clumps with color $X_i, Y_i, Z_i$, respectively. Here $X_i, Y_i, Z_i$ can be any of the colors
$A,B,C$, but they must be different colors. For the ease of computation we introduce $L_{-1}=L_{D+1}=\emptyset$, so $\ell_{-1}=\ell_{D+1}=0$.

\subsection{Sieve for neighborhoods of vertices from one layer} \hfill
\label{onelayer}

Here we assume $0\le i\le D$.

\noindent \underline{Case 1.} $c(i)=1$. We obviously have $\ell_{i-1}+\ell_{i+1}\geq \delta$, which we prefer to write as
\begin{equation} \label{oneperone} 
2\ell_{i-1}+2\ell_{i}+2\ell_{i+1}\geq 2\delta +2\ell_i.
\end{equation}
\underline{Case 2.} We have $2\ell_{i-1}+\ell_{i}+2\ell_{i+1}\geq 2\delta$ from the fact that vertices from either color in the
$i\textsuperscript{th}$ layer have at least $\delta $ neighbors. We prefer to write this as
\begin{equation} \label{onepertwo}
2\ell_{i-1}+2\ell_{i}+2\ell_{i+1}\geq 2\delta +\ell_i.
\end{equation}

\subsection{Sieve by two consecutive layers} \hfill
\label{twolayers}

Now we assume $0\le i<D$, so $i+1\le D$.

\noindent \underline{Case 1.} $i\in \mathcal{S}, i+1\in \mathcal{S}$. We have
\begin{equation}\label{twoperone}
\ell_{i-1}+\ell_i+\ell_{i+1}+\ell_{i+2}\geq 2\delta.
\end{equation}
\noindent \underline{Case 2.} $i\in \mathcal{S},i+1\notin \mathcal{S}$. Assume $X_i=L_i$, which implies $L_{i+1}=Y_{i+1}\cup Z_{i+1}$. 
 Apply (\ref{oneperone}) to $L_i$ to obtain $2\ell_{i-1}+2\ell_{i+1}\geq 2\delta$, apply 
(\ref{onepertwo}) to $L_{i+1}$ to obtain $2\ell_{i}+\ell_{i+1}+2\ell_{i+2}\geq 2\delta$, and average into
\begin{equation} \label{twopertwo'}
\ell_{i-1} +\ell_i+\frac{3}{2}\ell_{i+1}+\ell_{i+2}\geq 2\delta.
\end{equation} 
\noindent \underline{Case 3.}  $i\notin \mathcal{S},i+1\in \mathcal{S}$. Like in Case 2, we obtain
\begin{equation} \label{twoperthree'}
\ell_{i-1} +\frac{3}{2}\ell_i+\ell_{i+1}+\ell_{i+2}\geq 2\delta.
\end{equation} 
\noindent \underline{Case 4.} $i\notin \mathcal{S},i+1\notin \mathcal{S}$. In this case $L_i$ and $L_{i+1}$ must share a color, and their union must use all $3$ colors. We can assume without loss of generality that none of $X_i$, $Y_i $, 
$X_{i+1}$, $Z_{i+1} $ is empty. Take $x_i\in X_i, y_i\in Y_i, x_{i+1}\in X_{i+1}, z_{i+1}\in Z_{i+1}$.
 Considering the neighborhood of $x_i$, we have 
 \begin{equation} \label{elso}
 \delta\leq \ell_{i-1}+|Y_i|+|Z_i|+|Y_{i+1}|+|Z_{i+1}|,
 \end{equation}
considering the neighborhood of $x_{i+1}$, we have 
\begin{equation} \label{masodik}
 \delta\leq \ell_{i+2}+|Y_i|+|Z_i|+|Y_{i+1}|+|Z_{i+1}|,
 \end{equation}
considering the neighborhood of $y_{i}$, we have 
\begin{equation} \label{harmadik}
 \delta\leq \ell_{i-1}+|X_i|+|Z_i|+|X_{i+1}|+|Z_{i+1}|,
 \end{equation} 
considering the neighborhood of $z_{i+1}$, we have 
\begin{equation} \label{negyedik}
 \delta\leq \ell_{i+2}+|X_i|+|Y_i|+|X_{i+1}|+|Y_{i+1}|.
 \end{equation}
 Weighting (\ref{elso}) and (\ref{masodik})  with $1/3$, (\ref{harmadik}) and (\ref{negyedik}) with $2/3$, and summing them up, 
 we obtain
\begin{equation} \label{twoperfour}
\ell_{i-1} +\frac{4}{3}[\ell_i+\ell_{i+1}]+\ell_{i+2}\geq 2\delta.
\end{equation} 
Adding up  (\ref{twoperone}), (\ref{twopertwo'},) (\ref{twoperthree'}), (\ref{twoperfour}) for $i=1,\ldots, D-1$,  we obtain
\begin{equation} \label{pairsummary'}
4n+\sum_{\substack{(i,j): i\notin S\\
   j\notin S, |i-j|=1}} \frac{1}{3}\ell_i + \sum_{i: i+1\in S, i\notin S} \frac{1}{2}\ell_i 
+ \sum_{i: i-1\in S, i\notin S} \frac{1}{2}\ell_i    
       \geq 2D\delta  +O(\delta).
\end{equation}
The $O(\delta)$ error term arises from the fact that certain $\ell_i$ terms,  at the front and at the end, do not arise four times, as many times they are counted in $4n$.

\vspace{2mm}
\subsection{Sieve  for neighborhoods of vertices from three consecutive layers} \hfill
\label{threelayers}

\vspace{3pt}
\noindent We are going to give lower bounds to
\begin{equation} \label{otos}
2( \ell_{i-2}+ \ell_{i-1}+ \ell_{i}+ \ell_{i+1}+ \ell_{i+2})=2|L_{i-2}\cup L_{i-1}\cup L_i\cup L_{i+1}\cup L_{i+2}|
\end{equation}
using inclusion-exclusion, based on a case analysis of the color content of $L_{i-1},L_i,L_{i+1}$.

\noindent \underline{Case 1.} $i-1\notin \mathcal{S},i\notin \mathcal{S}, i+1\notin \mathcal{S}$. This boils down to two subcases:\\
\underline{Subcase 1.1.} $L_{i-1}$ and $L_{i+1}$ share at least two colors. We may assume in this case that none of $X_{i-1}$, $Y_{i-1}$, $X_{i}$, $Z_{i}$, $X_{i+1}$, $Y_{i+1}$ is empty.\\
Take $y_{i-1}\in Y_{i-1}, z_i\in Z_i, x_{i+1}\in X_{i+1}$. Using inclusion-exclusion we have 
$$|N(y_{i-1})\cup N(z_i)\cup N(x_{i+1})|\geq 3\delta - (|X_{i-1}|+\ell_i+|Y_{i+1}|).$$
Similarly, take $x_{i-1}\in X_{i-1}, z_i\in Z_i, y_{i+1}\in Y_{i+1}$ and use inclusion-exclusion to get 
$$|N(x_{i-1})\cup N(z_i)\cup N(y_{i+1})|\geq 3\delta- (|Y_{i-1}|+\ell_i+|X_{i+1}|).$$
Combining the two inequalities above we obtain
\begin{equation} \label{local}
2( \ell_{i-2}+ \ell_{i-1}+ \ell_{i}+ \ell_{i+1}+ \ell_{i+2})\geq 6\delta-2\ell_i-\ell_{i-1}-\ell_{i+1}.
\end{equation}
\underline{Subcase 1.2.} $L_{i-1}$ and $L_{i+1}$ share only one color. We may assume
that $L_{i-1}=X_{i-1}\cup Y_{i-1}$;  $L_{i+1}=Y_{i+1}\cup Z_{i+1}$ where $X_{i-1},Y_{i-1},Y_{i+1}, Z_{i+1}\neq\emptyset$, and $X_{i},Z_{i}\neq\emptyset$.\\
Apply inclusion-exclusion for the neighborhoods of $x_{i-1}\in X_{i-1}, z_i\in Z_{i}, y_{i+1}\in Y_{i+1}$ we get
$$|N(x_{i-1})\cup N(z_i)\cup N(y_{i+1})|\geq 3\delta - (|Y_{i-1}|+\ell_i+|Z_{i+1}|),$$
and doing it again for  $y_{i-1}\in Y_{i-1}, x_i\in X_i, z_{i+1}\in Z_{i+1}$ we get
$$|N(y_{i-1})\cup N(x_i)\cup N(z_{i+1})|\geq 3\delta- (|Z_{i-1}|+\ell_i+|Y_{i+1}|).$$
We obtain (\ref{local}), like in the previous subcase.\\
\underline{Case 2.} $i-1\notin \mathcal{S},i\in \mathcal{S}, i+1\notin \mathcal{S}$. 
We may assume $L_i=Z_i$, and for $j\in\{i-1,i+1\}$ $L_{j}=X_j\cup Y_j$, where none of $X_{i+1}$, $Y_{i+1}$, $X_{i-1}$, $Y_{i-1}$, $Z_i$ is empty. 
This can be handled like Subcase 1.1 to obtain (\ref{local}).\\
\underline{Case 3.} $i-1\in \mathcal{S},i\in \mathcal{S}, i+1\in \mathcal{S}$. We can assume $L_{i-1}=X_{i-1}, L_i=Y_i, L_{i+1}=Z_{i+1}$ (in case  $L_{i+1}=X_{i+1}$, switch colors $X$ and $Z$
in layers $L_j$ for  $j\geq i+1$). Select $x_{i-1}\in X_{i-1}, y_i\in Y_i, z_{i+1}\in Z_{i+1}$, and apply inclusion-exclusion for 
$|N(x_{i-1})\cup N(y_i) \cup N(z_{i+1}) |$ to obtain 
\begin{equation*} 
2( \ell_{i-2}+ \ell_{i-1}+ \ell_{i}+ \ell_{i+1}+ \ell_{i+2})\geq 6\delta-2\ell_i.
\end{equation*}
\underline{Case 4.} $i-1\in \mathcal{S},i\in \mathcal{S}, i+1\notin \mathcal{S}$. 
As the clump graph is canonical, $c(i+1)=2$. Hence we can assume  $L_{i-1}=X_{i-1}, L_i=Y_i, L_{i+1}=
X_{i+1}\cup Z_{i+1}$. Applying inclusion-exclusion for the neighborhoods of representative elements,
we obtain 
$$|N(x_{i-1})\cup N(y_i)\cup N(x_{i+1})|\geq 3\delta -\ell_i-|Z_{i+1}|$$
and 
$$|N(x_{i-1})\cup N(y_i)\cup N(z_{i+1})|\geq 3\delta -\ell_i-|X_{i+1}|.$$
Combining the last two displayed formulae, we obtain  
\begin{equation*} 
2( \ell_{i-2}+ \ell_{i-1}+ \ell_{i}+ \ell_{i+1}+ \ell_{i+2})\geq 6\delta-2\ell_i-\ell_{i+1},
\end{equation*}
which is even stronger than (\ref{local}).\\
\underline{Case 5.} $i-1\notin \mathcal{S},i\in \mathcal{S}, i+1\in \mathcal{S}$. This is a mirror image of Case 4, so we have
\begin{equation*} 
2( \ell_{i-2}+ \ell_{i-1}+ \ell_{i}+ \ell_{i+1}+ \ell_{i+2})\geq 6\delta-\ell_{i-1}- 2\ell_i.
\end{equation*}
\underline{Case 6.} $i-1\in \mathcal{S},i\notin \mathcal{S}, i+1\in \mathcal{S}$. 
We may assume $X_{i-1}=L_{i-1}$, $Y_i\cup Z_i=L_i$, $X_{i+1}=L_{i+1}$, where $X_{i-1}$, $Y_i$, $Z_i$, $X_{i+1}$ are nonempty. Select $x_{i-1}\in X_{i-1}$, $y_i\in Y_i$, $z_i\in Z_i$,
$x_{i+1}\in X_{i+1}$. Clearly
$$
|L_{i-2}\cup L_{i-1}\cup L_i\cup L_{i+1}\cup L_{i+2}|\geq
$$
$$ \geq |N(x_{i-1})\cup N(x_{i+1})| +|N(y_i)\cup N(z_i)|- |(N(x_{i-1})\cup N(x_{i+1}))\cap (N(y_i)\cup N(z_i))|\geq
$$
$$
\geq (2\delta-\ell_i) +(2\delta-\ell_{i-1}-\ell_{i+1})-\ell_i =4\delta -2\ell_i-\ell_{i-1}-\ell_{i+1}.
$$
We conclude
\begin{equation} \label{negyszog}
2( \ell_{i-2}+ \ell_{i-1}+ \ell_{i}+ \ell_{i+1}+ \ell_{i+2}) 
\geq 8\delta -4\ell_i-2\ell_{i-1}-2\ell_{i+1}.
\end{equation} 
\noindent \underline{Case 7.} $i-1\in \mathcal{S},i\notin \mathcal{S}, i+1\notin \mathcal{S}$.
We may assume $X_{i-1}=L_{i-1}$, $Y_i\cup Z_i=L_i$, where $X_{i-1}$, $Y_i$, $Z_i$ and $X_{i+1}$ are nonempty. Select $x_{i-1}\in X_{i-1}$, $y_i\in Y_i$, $z_i\in Z_i$,
$x_{i+1}\in X_{i+1}$. Clearly
\begin{eqnarray*}
    \abs*{L_{i-2}\cup L_{i-1}\cup L_{i}\cup L_{i+1}\cup L_{i+2}} 
    &\geq& \abs*{N(x_{i-1})\cup N(x_{i+1})\cup N(y_i)\cup N(z_i)}\\
    &\geq& (2\delta-\ell_i) +(2\delta-\ell_{i-1}-\ell_{i+1})-\ell_i\\ 
    &=&4\delta -2\ell_i-\ell_{i-1}-\ell_{i+1}.
\end{eqnarray*}
We conclude (\ref{negyszog}) again.\\
\underline{Case 8.} $i-1\notin \mathcal{S},i\notin \mathcal{S}, i+1\in \mathcal{S}$. As this is the mirror image of Case 7', we arrive at the same conclusion  (\ref{negyszog}),
as the conclusion is symmetric.

\vspace{2mm}
For $1\leq i\leq D-1$, we call a triplet of consecutive layers $(i-1,i,i+1)$ \emph{singular}, if $i\notin S$ and ($ i+1\in S$ or $i-1\in S$). 
Let $s$  denote the number of singular  triplets. 
Summing up the lower bounds to (\ref{otos}) obtained in the 8 cases, we have

\begin{eqnarray*}
    10n&\geq & \; 6\delta D-2n+O(\delta) -\sum_{\substack{i:i\notin\mathcal{S}\\ i-1,i+1\notin\mathcal{S}}}(\ell_{i-1}+\ell_{i+1})
    -\sum_{\substack{i:i\in\mathcal{S}\\ i-1,i+1\notin\mathcal{S}}}(\ell_{i-1}+\ell_{i+1})\\
    &&\;\;
  -\sum_{\substack{i\in\mathcal{S}\\ i-1\in\mathcal{S},i+1\notin\mathcal{S}}}\ell_{i+1}
  -\sum_{\substack{i\in\mathcal{S}\\ i-1\notin\mathcal{S},i+1\in\mathcal{S}}}\ell_{i-1}
    -\sum_{\substack{i:i\notin\mathcal{S}\\ i-1\in\mathcal{S}\vee i+1\in\mathcal{S}}}(-2\delta+2 \ell_i+\ell_{i-1}+\ell_{i+1})\\
&=&\; 6\delta D-2n+2s\delta +O(\delta) 
-\sum_{\substack{i:i\notin\mathcal{S}\\ i+1,i+2\notin\mathcal{S}}}\ell_i
-\sum_{\substack{i:i\notin\mathcal{S}\\ i-1,i-2\notin\mathcal{S}}}\ell_i
-\sum_{\substack{i:i\notin\mathcal{S}\\ i+1\in\mathcal{S},i+2\notin\mathcal{S}}}\ell_i
-\sum_{\substack{i:i\notin\mathcal{S}\\ i-1\in\mathcal{S}, i-2\notin\mathcal{S}}}\ell_i\\
   &&\;\;
  -\sum_{\substack{i\notin\mathcal{S}\\ i-1,i-2\in\mathcal{S}}}\ell_{i}
   -\sum_{\substack{i\notin\mathcal{S}\\ i+1,i+2\in\mathcal{S}}}\ell_{i}
 -2\sum_{\substack{i:i\notin\mathcal{S}\\ i-1\in\mathcal{S}\vee i+1\in\mathcal{S}}} \ell_i   
 -\sum_{\substack{i:i+1\notin\mathcal{S}\\ i\in\mathcal{S}\vee i+2\in\mathcal{S}}} \ell_i
 -\sum_{\substack{i:i-1\notin\mathcal{S}\\ i\in\mathcal{S}\vee i-2\in\mathcal{S}}} \ell_i
 \end{eqnarray*}
 Now we use that
 \begin{eqnarray*}
 \sum_{\substack{i:i+1\notin\mathcal{S}\\ i\in\mathcal{S}\vee i+2\in\mathcal{S}}} \ell_i
 +\sum_{\substack{i:i-1\notin\mathcal{S}\\ i\in\mathcal{S}\vee i-2\in\mathcal{S}}} \ell_i\le
 \sum_{\substack{i:i\notin\mathcal{S}\\ i+1\notin\mathcal{S},i+2\in\mathcal{S}}}\ell_i+
  \sum_{\substack{i:i\notin\mathcal{S}\\ i-1\notin\mathcal{S},i-21\in\mathcal{S}}}\ell_i+ 
 2 \sum_{i:i\in\mathcal{S}}\ell_i
   \end{eqnarray*}
 and
 \begin{eqnarray*}
 2\sum_{i\notin\mathcal{S}}\ell_i&=& \left(\sum_{\substack{i:i\notin\mathcal{S}\\ i+1,i+2\notin\mathcal{S}}}\ell_i
+ \sum_{\substack{i:i\notin\mathcal{S}\\ i+1\in\mathcal{S},i+2\notin\mathcal{S}}}\ell_i
+\sum_{\substack{i:i\notin\mathcal{S}\\ i+1\notin\mathcal{S},i+2\in\mathcal{S}}}\ell_i
+\sum_{\substack{i:i\notin\mathcal{S}\\ i+1,i+2\in\mathcal{S}}}\ell_i
 \right)\\
 &&\;\;
  +\left(\sum_{\substack{i:i\notin\mathcal{S}\\ i-1,i-2\notin\mathcal{S}}}\ell_i
+ \sum_{\substack{i:i\notin\mathcal{S}\\ i-1\in\mathcal{S},i-2\notin\mathcal{S}}}\ell_i
+\sum_{\substack{i:i\notin\mathcal{S}\\ i-1\notin\mathcal{S},i-2\in\mathcal{S}}}\ell_i
+\sum_{\substack{i:i\notin\mathcal{S}\\ i-1,i-2\in\mathcal{S}}}\ell_i
 \right)
 \end{eqnarray*}
 to obtain
 \begin{eqnarray*}
12n &\geq&6\delta D+2s\delta +O(\delta) 
  -2\sum_{i:i\notin\mathcal{S}}\ell_i -2\sum_{\substack{i:i\notin\mathcal{S}\\ i-1\in\mathcal{S}\vee i+1\in\mathcal{S}}} \ell_i  
    -2\sum_{i:i\in{S}} \ell_i\\
 &=&6\delta D+2s\delta-2n +O(\delta) -2\sum_{\substack{i:i\notin\mathcal{S}\\ i-1\in\mathcal{S}\vee i+1\in\mathcal{S}}} \ell_i .   
 \end{eqnarray*}
This gives
\begin{eqnarray}
\label{Evatriples2}
7n\geq 3\delta D+s\delta +O(\delta) -\sum_{\substack{i:i\notin\mathcal{S}\\ i-1\in\mathcal{S}\vee i+1\in\mathcal{S}}} \ell_i.
\end{eqnarray}

\section{Optimization}
\label{sec:opt}

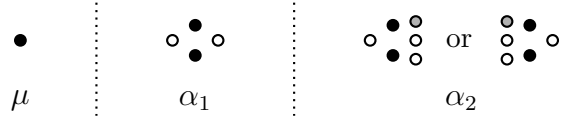
\begin{figure}[ht]
\centering
\begin{tikzpicture}[thick]
  \draw[black,fill=black] (1.9,0) circle (.7mm);
  \node at (1.9,-.8) {$\mu$};
     \draw[dotted] (2.9,.5)--(2.9,-1);
    \draw[black] (3.9,0) circle (.7mm);
    \draw[black,fill=black] (4.2,-.2) circle (.7mm);
    \draw[black,fill=black] (4.2,.2) circle (.7mm);
    \draw[black] (4.5,0) circle (.7mm);
    \node at (4.2,-.8) {$\alpha_1$};
    \draw[dotted] (5.5,.5)--(5.5,-1);
      \draw[black] (6.5,0) circle (.7mm);
    \draw[black,fill=black] (6.8,-.2) circle (.7mm);
    \draw[black,fill=black] (6.8,.2) circle (.7mm);
    \draw[black] (7.1,-.25) circle (.7mm);
    \draw[black] (7.1,0) circle (.7mm);
    \draw[black, fill=gray!60] (7.1,.25) circle (.7mm);
    \node[draw=white, text width=.4cm] at (7.7,0) {\small or};
    \draw[black] (8.3,-.25) circle (.7mm);
    \draw[black] (8.3,0) circle (.7mm);
    \draw[ black, fill=gray!60] (8.3,.25) circle (.7mm);
    \draw[black,fill=black] (8.6,-.2) circle (.7mm);
    \draw[black,fill=black] (8.6,.2) circle (.7mm);
    \draw[black] (8.9,0) circle (.7mm);
    \node at (7.7, -.8) {$\alpha_2$};   
\end{tikzpicture}
\caption{Visual representation for some variables denoted with Greek letters. Layers with black filled circles represent the layers whose vertices we count,   the empty circles show how many colors are present in the nearby layers. Gray filled circles represent a  third  color that may or may not be present
in the layer.
}
\label{fig:greek}
\end{figure}

The inequalities (\ref{pairsummary'})  and (\ref{Evatriples2}) are key  constraints for our linear program. 
The linear program is in global variables, which are mostly the fraction of vertices of $G$ in certain type of layers, which live in a neighborhood of certain type of layers.   The global  variables, denoted by Greek letters, will be:\\
\begin{eqnarray*}
\mu &=& \dfrac{1}{n} \sum\limits_{i:c(i)=1}\ell_i \\ 
\alpha_1 &=&\dfrac{1}{n} \sum\limits_{\substack{i:0<i<D, c(i)=2\\ i-1,i+1\in\mathcal{S}}}\ell_i\\
\alpha_2 &=&\dfrac{1}{n} \sum\limits_{\substack{i:0<i<D, c(i)=2, \\i-1\in\mathcal{S},i+1\notin\mathcal{S}}}\ell_i \quad + 
\quad \dfrac{1}{n} \sum\limits_{\substack{i:0<i<D, c(i)=2, \\i+1\in\mathcal{S}, i-1\notin\mathcal{S}}}\ell_i\\
\phi&=&\frac{D\delta}{n}\\
\psi&=&\frac{\delta s}{n}
\end{eqnarray*}
Figure~\ref{fig:greek} illustrate the variables whose definition involves sums.
Clearly, all variables are non-negative. 
We use Corollary~\ref{3canon} on what kind of layers can be consecutive.
From the definitions, it easily follows that
\begin{equation}\label{obvious}
\mu+\alpha_1+\alpha_2\leq 1\,\,\,\text{\ \  and\ \ }\,\,\,\psi\leq \frac{2}{3}.\\    
\end{equation}
We have
$$\sum_{\substack{(i,j): i\notin S\\
   j\notin S, |i-j|=1}}\ell_i =2n(1-\alpha_1-\alpha_2-\mu)+n\alpha_2+O(\delta)
   =n\left(2-2\mu -2\alpha_1-\alpha_2+O\left(\frac{\delta}{n}\right)\right), 
   $$
 since (except possibly for $i=D$) $\ell_i$'s accounted for in the definition of $\mu$ and $\alpha_i$ do not contribute to the sum on the left side,
 $\ell_i$-s accounted for in $\alpha_2$ appear once, and all other $\ell_i$'s appear twice.
In addition,
$$\sum_{i: i+1\in S, i\notin S} \ell_i 
+ \sum_{i: i-1\in S, i\notin S}\ell_i=n\left(2\alpha_1+\alpha_2+O\left(\frac{\delta}{n}\right)\right).$$
Using these observations,
simple algebra derives
from (\ref{pairsummary'})
\begin{equation}
    12\phi + 4\mu -2\alpha_1 -\alpha_2
    \leq 28 +O\left(\frac{\delta}{n}\right).
    \label{huszonnyolcas}
\end{equation}
From \ref{Evatriples2}, using
$$
\sum_{\substack{i:i\notin\mathcal{S}\\ i-1\in\mathcal{S}\vee i+1\in\mathcal{S}}} \ell_i.
=n(\alpha_1+\alpha_2)
$$
we get
\begin{equation}
\label{tiga2}
    7 \geq \; 3\phi+\psi-\alpha_1-\alpha_2+O\left(\frac{n}{\delta}\right).
\end{equation}

Let $\mathcal{D}$ denote  the set of layers with 2 colors, with singles on both side.  (Their cardinalities  added up to  $\alpha_1$.) Let $\mathcal{E}$ denote  the set of layers that are adjacent to at least one layer from $\mathcal{D}$. Hence all layers in $\mathcal{E}$ are singles. Let $\mathcal{F}$ denote  the set of remaining layers,
i.e. not in $\mathcal{D}\cup \mathcal{E}$. First note that
\begin{equation} \label{addtodiam}
|\mathcal{D}|+|\mathcal{E}|+| \mathcal{F}|=D+1.
\end{equation}
By the minimum degree condition, for all $i:0<i<D$ we have $\delta\leq \ell_{i-1}+\ell_{i}+\ell_{i+1}$. Hence,
$\delta |\mathcal{F}| \leq \sum_{i\in  \mathcal{F}} (  \ell_{i-1}+\ell_{i}+\ell_{i+1}   )\leq 3n(1-\alpha_1) +O(\delta)$ and
\begin{equation} \label{Fbound}
| \mathcal{F}|\leq     3(1-\alpha_1)\frac{n}{\delta}+O(1).
\end{equation}
It is not difficult to see that $|\mathcal{D}|+|\mathcal{E}|\leq 3s$. 
Using this observation with (\ref{addtodiam}) and (\ref{Fbound}), we obtain
$$3\phi=\frac{3\delta}{n}(|\mathcal{D}|+|\mathcal{E}|+| \mathcal{F}|-1)\leq \frac{3\delta}{n}(3s)+\frac{3\delta}{n}| \mathcal{F}|  +O\left( \frac{\delta}{n} \right)  $$
and hence 
\begin{equation}  \label{Laszlo_improvement}
\phi \leq 3\psi+ 3(1-\alpha_1)+ O\left( \frac{\delta}{n} \right)  .
\end{equation} 
We tried to use more inequalities and  more variables, splitting $\alpha_1$ further, based on the number of colors in the layers before and after. Removing 
redundant variables and conditions,  
we finalized our linear program  based on constraints  (\ref{obvious}, (\ref{huszonnyolcas}), (\ref{tiga2}) and (\ref{Laszlo_improvement}) as follows:\\
\begin{tabular}{lll}
\textbf{Maximize}& $\phi=\dfrac{D\delta}{n}$&\\
\textbf{subject to}\\
    &$\displaystyle{\color{white}12\phi+3\psi+4}\mu+{\color{white}2}\alpha_1+\alpha_2$ &$\displaystyle\leq 1$\\
    &$\displaystyle{\color{white}12\psi+}3\psi$&$\displaystyle\leq 2$\\    
    &$\displaystyle12 \phi {\color{white}3\psi+\,\,}+4\mu - 2\alpha_1 -\alpha_2$      &$\displaystyle\leq 28 + O\left(\frac{\delta}{n}\right)$\\
    &$\displaystyle {\color{white}1}3\phi{+ \color{white}3}\psi{\color{white}+4\mu+}-{\color{white}2}\alpha_1-\alpha_2$ &$\displaystyle\leq 7{\color{white}0}+O\left(\frac{\delta}{n}\right)$\\
    &$\displaystyle{\color{white}12}\phi-3\psi{\color{white}+4\mu\,\,}+3\alpha_1$    &$\displaystyle\leq 3{\color{white}0}+O\left(\frac{\delta}{n}\right)$\\
    &${\color{white}12121212121}\displaystyle \phi, \mu,  \psi, \alpha_1,  \alpha_2$    &$\displaystyle\geq 0 {\color{white}0 + O\left(\frac{\delta}{n}\right)}$
\end{tabular}

 Let 
$\mathbf{x}= (x_1,x_2,x_3,x_4,x_5)^T=    (\phi, \mu,  \psi, \alpha_1,  \alpha_2)^T$, let $A$ be the $5\times 5$ coefficient matrix above, $\mathbf{b}=\left(1,2,28,7, 3\right)^T$,  and $\mathbf{h}$ be any
concrete error term in the constraint column within the $O\left(\frac{\delta}{n}\right)$ bounds. Let $\mathbf{y}= (y_1,y_2,y_3,y_4,y_5)$.
Consider now   four closely related linear programs:
\begin{align}
A\mathbf{x}&\leq \mathbf{b}+\mathbf{h}; &\mathbf{x}\geq 0; &\hbox{\ \ maximize \ \ } x_1;   \label{eredeti}       \\
A\mathbf{x}&\leq \mathbf{b}; &\mathbf{x}\geq 0; &\hbox{\ \ maximize \ \ } x_1;   \label{eredetiveges}       \\
\mathbf{y}A&\geq (1,0,0,0,0,0,0); &\mathbf{y} \geq 0; &\hbox{\ \ minimize \ \ }   \mathbf{y}  (\mathbf{b}+\mathbf{h})^T ;  \label{eredetidual}       \\
\mathbf{y}A &\geq (1,0,0,0,0,0,0); &\mathbf{y}\geq 0; &\hbox{\ \ minimize \ \ }    \mathbf{y} \mathbf{b}^T  .  \label{eredetivegesdual}  
\end{align}
Our standard reference to linear programming is \cite{dantzig}. Note that (\ref{eredeti}) is identical to the displayed linear program, and that 
(\ref{eredeti})  and (\ref{eredetidual}), and (\ref{eredetiveges})  and (\ref{eredetivegesdual}) are dual linear programs, respectively, and the Duality Theorem
of Linear Programming applies to them.
Utilizing the open source online tool \cite{PHP}, we solved (\ref{eredetiveges}) 
with optimum $\phi=\frac{57}{23}$
attained at $( \frac{57}{23},0,\frac{13}{22},\frac{17}{23},\frac{6}{23}        )^T$. 
 By duality, $\frac{57}{23}$ is the optimum of (\ref{eredetivegesdual}) as well.
The polytope defined by the constraints of (\ref{eredetiveges}) 
has a feasible solution $\mathbf{x}^*$, for which  inequalities in the $3^{\text{rd}}, 4^{\text{th}}$ and $5^{\text{th}}$ constraints hold strictly---just modify the optimal solution by reducing $\phi$ a bit.  We want to show that (\ref{eredeti}) has a finite optimum, 
 if $n$ is sufficiently large. By the first constraint in (\ref{eredeti}), $\phi\leq 3$ for $n$  sufficiently large. Our only concern is whether (\ref{eredeti}) has a feasible 
 solution at all, as negative error terms might eliminate it. Clearly $\mathbf{x}^*$ is  a feasible 
 solution,  if $n$ is sufficiently large. By the Duality Theorem, (\ref{eredetidual}) has a finite minimum value, which is equal 
 to the maximum value for  (\ref{eredeti}). As the polytopes of (\ref{eredetidual}) and (\ref{eredetivegesdual})  are the same, this finite minimum is achieved in one 
 of the finitely many vertices of this polytope, say $\mathbf{y}^{(1)},...,\mathbf{y}^{(m)}$, as these linear programs only differ in their objective functions. Now we have
 \begin{eqnarray*}
\max \hbox{ \  $x_1$ in   (\ref{eredeti})  }&=&
 \min_{  \mathbf{y}\geq 0 } \mathbf{y} (\mathbf{b}+\mathbf{h})^T  =  \min_{ i=1}^m \mathbf{y}^{(i)}(\mathbf{b}+\mathbf{h})^T \\
& \geq &   \min_{ i=1}^m \mathbf{y}^{(i)}\mathbf{b}^T  + \min_{ i=1}^m \mathbf{y}^{(i)}\mathbf{h}^T =\frac{57}{23}+O\left(\frac{\delta}{n}\right).
 \end{eqnarray*}
On the other hand, 
 \begin{eqnarray*}
\max \hbox{ \  $x_1$ in   (\ref{eredeti})  }&=&  \min_{  \mathbf{y}\geq 0 }  \mathbf{y} (\mathbf{b}+\mathbf{h})^T =  \min_{ i=1}^m\Bigl(\mathbf{y}^{(i)}\mathbf{b}^T +\mathbf{y}^{(i)} \mathbf{h}^T\Bigl)\\
&\leq & \min_{ i=1}^m\Bigl(\mathbf{y}^{(i)} \mathbf{b}^T +\max_{i=1}^m \mathbf{y}^{(i)} \mathbf{h}^T\Bigl)
=  \min_{ i=1}^m   \mathbf{y}^{(i)} \mathbf{b}^T +\max_{i=1}^m \mathbf{y}^{(i)} \mathbf{h}^T\\
&=& \frac{57}{23}+O\left(\frac{\delta}{n}\right).
 \end{eqnarray*}
 We concluded the proof of Theorem \ref{th:189o76}.  The linear programming arguments above should be well-known, but we were unable to find a reference.

The following theorem proves     the weaker version of Conjecture~\ref{con:7o3} for $k=3$, in a restricted case of no single layers: 
\begin{theorem}
For every connected $3$-colorable graph $G$ of order $n$ and minimum degree at least
$\delta\ge 1$, such that in the canonical clump graph of $G$ no layer $L_i$ is a single for $0<i<D$, we have
$$ \diam(G) \leq \frac{7n}{3\delta}+O(1). $$
\label{special}
\end{theorem}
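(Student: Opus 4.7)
The hypothesis says exactly that the only layers in $\mathcal{S}$ are $L_0$ and possibly $L_D$, so every interior layer $L_i$ with $0<i<D$ has $c(i)\ge 2$. First I would invoke Theorem~\ref{th:canonical} (as at the beginning of Section~\ref{sec:inex}) to assume the clump graph of $G$ is canonical and that every layer has $\ell_i\le 3\delta$. Under the no-interior-singles assumption, every consecutive pair $(L_i,L_{i+1})$ with $1\le i\le D-2$ falls into Case~4 of Subsection~\ref{twolayers}, so the inequality (\ref{twoperfour}),
\[
\ell_{i-1}+\frac{4}{3}\bigl(\ell_i+\ell_{i+1}\bigr)+\ell_{i+2}\ge 2\delta,
\]
is available for every such $i$. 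This is the only local inequality I plan to use.

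Next, I would simply sum this inequality over $i=1,\ldots,D-2$. Any interior $\ell_j$ (say for $3\le j\le D-3$) receives coefficient $1+\tfrac{4}{3}+\tfrac{4}{3}+1=\tfrac{14}{3}$ on the left, while the finitely many boundary terms $\ell_0,\ell_1,\ell_2,\ell_{D-2},\ell_{D-1},\ell_D$ each contribute only $O(\delta)$ to the total thanks to $\ell_i\le 3\delta$. The right-hand side is $2(D-2)\delta=2D\delta+O(\delta)$. Using $\sum_j \ell_j=n$, the sum collapses to
\[
\frac{14}{3}n+O(\delta)\ge 2D\delta,
\]
which rearranges to $\diam(G)=D\le \frac{7n}{3\delta}+O(1)$, the claimed bound.

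An equivalent, essentially one-line route goes through the linear program of Section~\ref{sec:opt}: the restricted hypothesis forces the four global variables $\mu,\alpha_1,\alpha_2,\psi$ to all be $O(\delta/n)$, since each of them is driven by singles or singular triplets, and under our assumption there are only $O(1)$ of either. Plugging this into constraint (\ref{huszonnyolcas}) yields $12\phi\le 28+O(\delta/n)$, i.e.\ $\phi\le \frac{7}{3}+O(\delta/n)$, which is the same conclusion. I do not foresee a genuine obstacle: the whole point of the restricted hypothesis is that it eliminates precisely the terms that forced the weaker $\tfrac{57}{23}$ constant in Theorem~\ref{posmain}, leaving the coefficient $\tfrac{4}{3}$ of the Case~4 pair inequality as the sole bottleneck. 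The only care required is the routine bookkeeping of the $O(\delta)$ boundary contributions and treating the tiny-diameter cases $D=O(1)$ separately inside the $+O(1)$ term.
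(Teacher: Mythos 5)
Your proposal is correct and matches the paper's argument: the paper's proof simply specializes inequality~(\ref{pairsummary'}) (which is obtained by summing~(\ref{twoperone})--(\ref{twoperfour})) to the no-interior-singles case, where only the Case~4 bound~(\ref{twoperfour}) contributes, producing $\tfrac{14}{3}n \ge 2D\delta + O(\delta)$ exactly as you derive by summing~(\ref{twoperfour}) directly. Your alternative LP route, where $\mu,\alpha_1,\alpha_2,\psi=O(\delta/n)$ forces $12\phi\le 28+O(\delta/n)$, is also sound and parallels the paper's secondary remark that~(\ref{Evatriples2}) with $s=O(1)$ gives the same constant.
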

\begin{proof}
 If there are no single color layers besides $L_0$ and $L_D$, in (\ref{pairsummary'}) the second and third sums are zero, 
and the first is upper bounded by $\frac{2}{3}n$. 
This yields $14n/3\geq 2D\delta+O(\delta)$. An alternative proof of the theorem is from \ref{Evatriples2}, in which $s=0$ and the sum is $O(\delta)$ in this case. 
\end{proof}
The theorem also holds if the number of single layers is bounded as $n\rightarrow \infty$.
We are not aware of constructions getting close to this upper bound without single layers.

\end{document}